\definecolor{darkred}{rgb}{.6,0,0}
\definecolor{darkblue}{rgb}{0,0,.7}
\newtheorem{lemma}{Lemma}[section]
\newtheorem{example}{Example}[section]
\newtheorem{remark}{Remark}[section]
\title[FBSDEs for minimum variance estimation]{On forward-backward SDE approaches to continuous-time minimum variance 
estimation}
\author{Jin Won Kim \and Sebastian Reich}
\address{Institut f\"ur Mathematik, Universit\"at Potsdam, Karl-Liebknecht-Str. 24/25, 14476 Potsdam}
\date{\today}                                           
\begin{document}
\maketitle

\begin{abstract} The work of Kalman and Bucy has established a duality between filtering and optimal estimation in the context of time-continuous linear systems. This duality has recently been extended to time-continuous nonlinear systems in terms of an optimization problem constrained by a backward stochastic partial differential equation. Here we revisit this problem from the perspective of appropriate forward-backward stochastic differential equations. This approach sheds new light on the estimation problem and provides a unifying perspective. It is also demonstrated that certain formulations of the estimation problem lead to deterministic formulations similar to the linear Gaussian case as originally investigated by Kalman and Bucy. Finally, optimal control of partially observed diffusion processes is discussed as an application of the proposed estimators.
\end{abstract}
%
\section{Introduction}
%

In this paper, we revisit the problem of estimation in the context of continuous time filtering problems \cite{jazwinski2007stochastic,bain2008fundamentals}. 
Our interest has been triggered by recent progress on the topic \cite{Kim22,KimMehta2022a,KimMehta2022b}, which has extending the dual optimal control perspective for linear problems, as originally exposed by \cite{kalman1960new,kalman1961new,jazwinski2007stochastic}, 
to nonlinear filtering problems. The key idea is to consider an estimator in the form of
\begin{equation} \label{eq:est}
    \mathcal{S}_T[f] = \mu[{\rm Y}_0] - \int_0^T \mathcal{U}_t^{\rm T}{\rm d}Z_t,
\end{equation}
where $f(x)$ is the function which the conditional expectation value is to be estimated of. Furthermore, $\mu$ is the density of states at initial time $t=0$ and $Z_{0:T}$ is the observation path, which are both given. The function ${\rm Y}_0(x)$ and the control $\mathcal{U}_{0:T}$ are to be determined such that $\pi_T[f] = \mathcal{S}_T[f]$. Here $\pi_T$ denotes the filtering distribution at final time $t=T$.  As proposed in the work of \cite{Kim22,KimMehta2022a,KimMehta2022b}, the function ${\rm Y}_t(x)$, and the control $\mathcal{U}_t$, $t\in [0,T]$, satisfy the backward stochastic partial differential equation (BSPDE)
\begin{equation} \label{eq:BSPDE_K}
    -{\rm d}{\rm Y}_t = \mathcal{L}^x \,{\rm Y}_t {\rm d}t- {\rm V}_t^{\rm T}{\rm d}Z_t
    + (\mathcal{U}_t+{\rm V}_t)^{\rm T}h \,{\rm d}t, \quad {\rm Y}_T = f.
\end{equation}
The minimum variance property of the estimator is then expressed in terms of an appropriate cost function which determined the optimal $\mathcal{U}_{0:T}$ uniquely. Here $\mathcal{L}^x$ denotes the generator of the underlying signal process and $h(x)$ is the forward map in the observation process.

We address the same estimation problem from the perspective of forward-backward stochastic differential equations (FBSDEs) instead \cite{Carmona}. Our approach allows for a unified perspective on the following four scenarios:
\begin{itemize}
    \item[(i)] estimating $\pi_T[f]$ using an estimator of the form (\ref{eq:est}),
    \item[(ii)] estimating $\sigma_T[f]$ using an estimator of the form (\ref{eq:est}), where $\sigma_t$ denotes the non-normalized filtering distribution satisfying the Zakai equation \cite{jazwinski2007stochastic,bain2008fundamentals},
    \item[(iii)] estimating $\pi_T[f]$ using the innovation 
\begin{equation} \label{eq:innovation}
I_t = Z_t - \int_0^t \pi_s[h]{\rm d}s
\end{equation}
in the estimator (\ref{eq:est}) instead of the observations $Z_t$, 
\item[(iv)] estimating $\sigma_T[f]$ using the induced observation error
\begin{equation} \label{eq:obs_error}
W_t = Z_t - \int_0^t h(X_s){\rm d}s
\end{equation}
instead of the observations $Z_t$ directly.
\end{itemize}
Our FBSDE approach allows for a transparent definition of minimum variance estimation in all four cases. It is found that (ii)-(iv) lead to deterministic backward Kolmogorov equations \cite{Pavliotis2016,Carmona} in the unknown ${\rm Y}_t(x)$ instead of the BSPDE formulation (\ref{eq:BSPDE_K}), which arises from (i). Furthermore, while (ii) and (iii) lead to the standard backward Kolmogorov equation, (iv) introduces an additional drift term in the spirit of Feynman--Kac formulations \cite{Carmona}. By strictly following a FBSDE approach, a numerical treatment also becomes feasible following established computational methods for FBSDEs \cite{CKSY21}. This aspect will be explored in a future publication.

We emphasize that we assume throughout this paper that the observable $f(x)$ is a deterministic function independent of the data $Z_{0:T}$. On the contrary, applications of estimators of the form (\ref{eq:est}) to the problem of filter stability have been discussed in \cite{Kim22,KimMehta2023c}. In this context, the terminal condition in (\ref{eq:BSPDE_K}) becomes data dependent and the analysis of the resulting FBSDEs becomes more involved. Such an extension is left to future work. Another possible extension concerns estimators for more general conditional expectation values such as
\begin{equation} \label{eq:general_expectation}
\mathcal{V}_T := \int_0^T \pi_t[c_t]{\rm d}t + \pi_T[f]
\end{equation}
with given functions $c_t(x)$, $t\in [0,T]$, and $f(x)$. In this paper, we briefly discuss an application of (\ref{eq:general_expectation}) in the context of optimal control of partially observed diffusion processes  \cite{Bensoussan92,Borkar}.

The remainder of this paper is structured as follows. Section \ref{sec:filtering} provides the necessary background on time-continuous nonlinear filtering and introduces two sets of forward SDEs (FSDEs), which will be play an essential role in deriving the optimal estimators for scenarios (i)-(iv). The estimators for the conditional expectation values are discussed for all four scenarios separately in Section \ref{sec:estimation}. Emphasis is put on a unified framework using appropriate FBSDEs. It is demonstrated that the FBSDE become equivalent to a backward Kolmogorov equation under scenarios (ii) and (iii). The FBSDE perspective also leads to novel formulations of the stochastic optimal estimation problem (i) while scenario (iv) results in a backward PDE formulation in the spirit of Feynman--Kac. Section \ref{sec:SOC} provides an application of (\ref{eq:general_expectation}) to problems from optimal control of partially observed diffusion processes \cite{Bensoussan92,Borkar}.
Our paper ends with some conclusions. 

%
\section{The continuous time filtering problem} \label{sec:filtering}
%

The joint measure over paths $(X_{0:T},Z_{0:T})$ in state variable $x$ and observed variable $z$ is determined by the FSDE
\begin{subequations} \label{eq:FSDEs0}
\begin{align} \label{eq:state_n}
{\rm d}X_t &= b(X_t){\rm d}t + \sigma {\rm d}B_t,\qquad X_0 \sim \mu,\\ \label{eq:output_n}
{\rm d}Z_t &= h(X_t){\rm d}t + {\rm d}W_t, \qquad  \,\, Z_0 = 0,
\end{align}
\end{subequations}
for time $t \in [0,T]$. Here $(B_t,W_t)$ are independent Brownian motions. 
We denote the induced path measure by $\mathbb{P}$ and the expectation value of a function $g(x,z)$  by $\mathbb{E}[g(X_t,Z_t)]$. We are interested in the
conditional path measure $X_t|Z_{0:t}$ and denotes its density at time $t$, by $\pi_t$, $t \in [0,T]$ \cite{jazwinski2007stochastic,bain2008fundamentals}. Recall that the filtering distribution $\pi_t$ satisfies the Kushner--Stratonovitch (KS) equation of nonlinear filtering \cite{jazwinski2007stochastic,bain2008fundamentals}.

It is also well-known that the innovation process
(\ref{eq:innovation}) behaves like Brownian motion independent of $B_t$ under the path measure $\mathbb{P}$ and we introduce the FSDE
\begin{subequations} \label{eq:FSDEs1}
    \begin{align} \label{eq:FSDE1}
        {\rm d}X_t &=b(X_t){\rm d}t + \sigma {\rm d}B_t, \qquad  \qquad \qquad  X_0 \sim \mu,\\ \label{eq:FSDE2}
        {\rm d} D_t &= D_t \,(h(X_t)-\pi_t[h])^{\rm T}{\rm d}I_t, \quad 
        \qquad     \, D_0 = 1,
     \end{align}
\end{subequations}
which are driven by the independent Brownian processes $(B_t,I_t)$. We denote the path measure induced by the FSDE (\ref{eq:FSDEs1}) by $\mathbb{P}^\ast$ and its expectation operator by $\mathbb{E}^\ast$. It holds that
\begin{equation}
\pi_t[f] = \mathbb{E}^\ast [D_t f(X_t)|I_{0:t}].
\end{equation}
We also introduce the shorthand notation
\begin{equation}
    \mathbb{E}^\ast_I[g(X_t,D_t)] = \mathbb{E}^\ast [g(X_t,D_t)|I_{0:t}],
\end{equation}
for any suitable function $g(x,d)$.


The SDE (\ref{eq:FSDE2}) is of mean-field type, which becomes more transparent when rewritten in the form
\begin{equation}
{\rm d} D_t = D_t \,(h(X_t)-\mathbb{E}_I^\ast [D_th(X_t)])^{\rm T}{\rm d}I_t.
\end{equation}
Hence, alternatively, we consider the change of measure 
\begin{equation}
    \frac{{\rm d}\tilde{\mathbb{P}}}{{\rm d}\mathbb{P}}(X_t) = \tilde{D}_t^{-1},
\end{equation}
where 
\begin{equation}
{\rm d}\tilde{D}_t = \tilde{D}_t \,h(X_t)^{\rm T}
{\rm d}Z_t, \qquad \tilde{D}_0 = 1.
\end{equation}
We recall that this change of measure arises from an application of Girsanov's theorem \cite{Pavliotis2016} 
and that $Z_t$ behaves like Brownian motion independent of $B_t$ under the new path measure $\tilde{\mathbb{P}}$
\cite{bain2008fundamentals}.  We, hence, introduce a second pair of FSDEs
\begin{subequations} \label{eq:FSDEstilde}
    \begin{align} \label{eq:FSDE1tilde}
        {\rm d}X_t &=b(X_t){\rm d}t
        + \sigma {\rm d}B_t, \qquad  \qquad \qquad  X_0 \sim \mu,\\ \label{eq:FSDE2tilde}
        {\rm d} \tilde{D}_t &= \tilde{D}_t \,h(X_t)^{\rm T}{\rm d}Z_t, \qquad 
        \qquad  \qquad    \,\,\,\, \,
        \tilde{D}_0 = 1,
    \end{align}
\end{subequations}
which are  now driven by the independent Brownian processes $(B_t,Z_t)$. The induced path measure is denoted by $\tilde{\mathbb{P}}^\ast$ and its associated expectation operator by $\tilde{\mathbb{E}}^\ast$. 

Furthermore, one can introduce the non-normalized filtering density 
$\sigma_t$, which satisfies
\begin{equation}
\sigma_t[f] = \tilde{\mathbb{E}}^\ast [\tilde{D}_t f(X_t)|Z_{0:t}]
\end{equation}
for fixed observation process $Z_{0:t}$. Again we introduce the shorthand notation
\begin{equation}
\tilde{\mathbb{E}}^\ast_Z[g(X_t,\tilde{D}_t)]=\tilde{\mathbb{E}}^\ast[g(X_t,\tilde{D}_t)|Z_{0:t}].
\end{equation} 

Recall that $\sigma_t$ satisfies the Zakai equation of nonlinear filtering \cite{jazwinski2007stochastic,bain2008fundamentals} and that
\begin{equation}
\pi_t [f] = \frac{\sigma_t[f]}{\sigma_t[1]}.
\end{equation}

%
\section{FBSDE estimators for conditional expectation values}
\label{sec:estimation}
%

We wish to construct estimators for the conditional expectation values $\pi_T[f]$ and
$\sigma_T [f]$, respectively, of the following form:
\begin{subequations}
\begin{align} \label{eq:estimator I}
S_T^\sigma[f] &:= \tilde{Y}_0 - \int_0^T \tilde{U}_t^{\rm T} {\rm d}Z_t,\\ \label{eq:estimator II}
S_T^\pi[f] &:= \breve{Y}_0 - \int_0^T \breve{U}_t^{\rm T} {\rm d}I_t,\\ \label{eq:estimator III}
\hat{S}_T^\pi[f] &:= \hat{Y}_0 - \int_0^T \hat{U}_t^{\rm T} {\rm d}Z_t,\\
\label{eq:estimator IV}
\bar{S}_T^\sigma [f] &:= \bar{Y}_0 - \int_0^T \bar{U}_t^{\rm T}{\rm d}W_t.
\end{align}
\end{subequations}
The required random variables $\tilde{Y}_0$, $\breve{Y}_0$, $\hat{Y}_0$, $\bar{Y}_t$ and controls $\tilde{U}_{0:T}$, $\breve{U}_{0:T}$, $\hat{U}_{0:T}$, $\bar{U}_t$,
respectively, are chosen such that
\begin{subequations}
\begin{align} \label{eq:estimator I a}
\tilde{\mathbb{E}}_Z^\ast [S_T^\sigma[f]] &= \sigma_T[f],\\
\mathbb{E}_I^\ast [S_T^\pi[f]] &= \pi_T[f],\\
\mathbb{E}_I^\ast [\hat{S}_T^\pi[f] ]&= \pi_T[f], 
\label{eq:estimator III a}\\
\tilde{\mathbb{E}}_Z^\ast [\bar{S}_t^\sigma [f]] &= \sigma_T[f]
\end{align}
\end{subequations}
for any suitable observable $f(x)$. 

We derive appropriate FBSDE formulations for each of the four estimators in the following subsections. See \cite{Carmona,Bensoussan} for an introduction to BSDEs and FBSDEs.

%
\subsection{Observation-based estimator I} \label{sec:obs based I}
%

We start with the estimator (\ref{eq:estimator I}) and introduce the BSDE 
\begin{equation}
{\rm d}\tilde{Y}_t = \tilde{Q}_t^{\rm T} {\rm d}B_t +  
        \tilde{V}_t^{\rm T} {\rm d}Z_t, \qquad \quad \tilde{Y}_T = \tilde{D}_T f(X_T), \label{eq:BSDE1tilde}
\end{equation}
where  $(X_T,\tilde{D}_T)$ is defined by the FSDE (\ref{eq:FSDEstilde}).

We note that
\begin{subequations} 
\begin{align}
     \tilde{\mathbb{E}}^\ast_Z
\left[ \left( \tilde{D}_T f(X_T) - S_T^\sigma[f]\right)^2 \right] &=
 	    \tilde{\mathbb{E}}_Z^\ast \left[ \left( \tilde{D}_T f(X_T) - \tilde{Y}_0 +
    \int_0^T \tilde{U}_t^{\rm T}{\rm d}Z_t \right)^2 \right] \\
    &= \tilde{\mathbb{E}}_Z^\ast \left[ \left( \int_0^T \tilde{Q}_t^{\rm T} {\rm d}B_t +  
        \int_0^T (\tilde{U}_t+\tilde{V}_t)^{\rm T} {\rm d}Z_t  \right)^2 \right]\\
    &=  \tilde{\mathbb{E}}_Z^\ast \left[\int_0^T \left(\|\tilde{Q}_t\|^2 + \|\tilde{U}_t + \tilde{V}_t\|^2\right) {\rm d}t \right] + \tilde{M}_T,
\end{align}
\end{subequations}
where $\tilde{M}_T$ is a martingale term which vanishes under expectations with respect to observations $Z_{0:T}$. The desired control $\tilde{U}_{0:T}$ and its associated estimator (\ref{eq:estimator I}) are therefore chosen to minimize the cost functional
\begin{equation} \label{eq:cost_functional_sigma2}
    \tilde{J}_T(\tilde{U}_{0:T}) := 
     \tilde{\mathbb{E}}^\ast \left[\int_0^T \left(\|\tilde{Q}_t\|^2 + \|\tilde{U}_t + \tilde{V}_t\|^2\right) {\rm d}t \right] .
\end{equation}

The cost functional (\ref{eq:cost_functional_sigma2}) is minimized for
\begin{equation} \label{eq:optimal_control}
    \tilde{U}_t = -\tilde{V}_t.
\end{equation}
This control leads to an estimator (\ref{eq:estimator I}) of minimum variance as expressed by $\tilde{J}_T$. Furthermore, we immediately obtain from (\ref{eq:BSDE1tilde}) and (\ref{eq:estimator I}) that the mean estimation error satisfies
\begin{equation} \label{eq:estimator_error}
\sigma_T[f] - \tilde{\mathbb{E}}_Z^\ast [S_T^\sigma[f]]  = \tilde{\mathbb{E}}_Z^\ast \left[ \int_0^T (\tilde{U}_t+\tilde{V}_t)^{\rm T}{\rm d}Z_t \right] .
\end{equation}
Using (\ref{eq:optimal_control}), this implies (\ref{eq:estimator I a}) for the estimator (\ref{eq:estimator I}) as desired.


\noindent
In order to gain a better insight into the solution structure of the FBSDEs (\ref{eq:FSDEstilde}) and (\ref{eq:BSDE1tilde}) and to find an explicit explicit expression for the optimal control $\tilde{U}_{0:T}$, we introduce the 
notations $\tilde{Y}_s^{t,x,d}$, $\tilde{V}_s^{t,x,d}$, $\tilde{Q}_s^{t,x,d}$, and $\tilde{U}_s^{t,x,d}$ for solutions of the FBSDE considered over the restricted time interval $s\in [t,T]$ with initial condition 
$X_t = x$ and $\tilde{D}_t = d$. Equation (\ref{eq:BSDE1tilde}) now implies
\begin{equation} \label{eq:update1}
\tilde{Y}_t^{t,x,d} = \tilde{\mathbb{E}}^\ast \left[\tilde{Y}_{t+\tau}^{t,x,d}\right] 
\end{equation}
for $\tau\in (0,T-t]$, where $X_s^{t,x}$ denotes solutions of the forward SDE (\ref{eq:FSDE1tilde}) with $X_t = x$ for $s\ge t$ and similarly for $\tilde{D}_s^{t,x,d}$.  Here $\tilde{\mathbb{E}}^\ast$ denotes now expectation with respect to the Brownian noise $B_{t:t+\tau}$ in (\ref{eq:FSDE1tilde}) and the observations $Z_{t:t+\tau}$ in (\ref{eq:FSDE2tilde}) under the path measure $\tilde{\mathbb{P}}^\ast$ 
for fixed initial conditions $X_t = x$ and $\tilde{D}_t = d$. In particular, we can set $t+\tau = T$ and use 
$\tilde{Y}_T^{t,x,d} = \tilde{D}_T^{t,x,d} f(X_T^{t,x})$. 

Furthermore, let us denote the generator of the FSDEs (\ref{eq:FSDEstilde}) by $\tilde{\mathcal{L}}$, which acts on functions $g(x,d)$, that is,
\begin{equation}
\tilde{\mathcal{L}}\,g(x,d) = b(x)^{\rm T}\nabla_xg(x,d) + \frac{\sigma^2}{2}\nabla_x^2g(x,d)
+ \frac{d^2}{2} h(x)^{\rm T}h(x) \,\partial_d^2g(x,d).
\end{equation}
Then, under appropriate smoothness assumptions, the deterministic function 
\begin{equation}
\tilde{y}_t(x,d) := \tilde{Y}_t^{t,x,d}  
\end{equation}
 satisfies the backward Kolmogorov equation
\begin{equation}
-\partial_t \tilde{y}_t(x,d) = \tilde{\mathcal{L}}\,\tilde{y}_t(x,d), \qquad \tilde{y}_T(x,d) = d \,f(x),
\end{equation}
since
\begin{subequations}
\begin{align}
    {\rm d}\tilde{y}_t(X_t,\tilde{D}_t) &= (\partial_t \tilde{y}_t(X_t,\tilde{D}_t) +
    \tilde{\mathcal{L}} \tilde{y}_t(X_t,\tilde{D}_t)){\rm d}t \\
    &\qquad +\, \sigma \nabla_x \tilde{y}_t(X_t,\tilde{D}_t){\rm d}B_t +
    \partial_d \tilde{y}_t(X_t,\tilde{D}_t) \tilde{D}_t h(X_t)^{\rm T}{\rm d}Z_t \\&=  {\rm d}\tilde{Y}_t =
    \tilde{Q}_t^{\rm T} {\rm d}B_t + \tilde{V}_t^{\rm T}{\rm d} Z_t  .
\end{align}
\end{subequations}
Hence one also obtains
\begin{subequations}
    \begin{align}
        \tilde{Q}_t^{t,x,d} &= \sigma \nabla_x \tilde{y}_t(x,d),\\
        \tilde{V}_t^{t,x,d} &= d\,\partial_d \tilde{y}_t(x,d) \,h(x),
    \end{align}
\end{subequations}
and we can define the deterministic control function
\begin{equation}
\tilde{u}_t (x,d) := - d\,\partial_d \tilde{y}_t(x,d) \,h(x)
\end{equation}
and minimum variance control $\tilde{U}_t = \tilde{u}_t (X_t,\tilde{D}_t)$. 

To simplify further, we make the {\it ansatz} $\tilde{y}_t(x,d) = d \,y_t(x)$ and find that this assumption implies
\begin{equation}
    \tilde{\mathcal{L}} \,\tilde{y}_t(x,d) = d \,\mathcal{L}^x \,y_t(x),
\end{equation}
where $\mathcal{L}^x$ denotes the generator of (\ref{eq:FSDE1}).
Hence $y_t (x)$ has to satisfy the backward Kolmogorov equation
\begin{equation} \label{eq:BKE}
    -\partial_t y_t(x) = \mathcal{L}^x y_t(x), \qquad y_T(x) = f(x).
\end{equation}
Furthermore,
\begin{equation} \label{eq:simple1}
u_t (x,d) = -d \,y_t(x) \,h(x)
\end{equation}
and (\ref{eq:estimator I}) becomes
\begin{equation} 
    S_T^\sigma [f]  = y_0(X_0) + \int_0^T \tilde{D}_t \,y_t(X_t) \,h (X_t)^{\rm T}{\rm d}Z_t .
\end{equation}

A closely related estimator can be found in \cite{Kim22} (Section 2.3.3), where it has been used to derive Zakai's equation. The difference is that \cite{Kim22} considers directly the averaged estimator (\ref{eq:averaged estimator}) not relying on a FBSDE formulation. 

Furthermore, one can derive a BSDE representation for $y_t(x)$ directly. Upon introducing $Y_t$ such that $\tilde{Y}_t = \tilde{D}_t Y_t$ and utilizing (\ref{eq:FSDE2tilde}), (\ref{eq:output_n}), and (\ref{eq:BSDE1tilde}), one obtains the BSDE
\begin{equation} \label{eq:BSDE_W}
    {\rm d}Y_t =  Q_t^{\rm T} 
    {\rm d}B_t + V_t^{\rm T}{\rm d}W_t, \qquad Y_T = f(X_T),
\end{equation}
along the FSDEs (\ref{eq:FSDEs0}). 
Please be aware that $(Q_t,V_t)$ are not the same as in the BSDE (\ref{eq:BSDE1tilde}). In fact, since
\begin{subequations}
    \begin{align}
    {\rm d}\tilde{Y}_t &=\tilde{D}_t {\rm d}Y_t +
    Y_t \tilde{D}_t h(X_t)^{\rm T}{\rm d}Z_t +
    \tilde{D}_t h(X_t)^{\rm T}V_t {\rm d}t\\
    &= \tilde{D}_tQ_t^{\rm T}{\rm d}B_t + \tilde{D}_t \left( V_t + Y_t h(X_t)\right)^{\rm T}{\rm d}Z_t ,
    \end{align}
\end{subequations}
one finds that $\tilde{Q}_t = \tilde{D}_t Q_t$ and $\tilde{V}_t = \tilde{D}_t V_t + \tilde{D}_t Y_t h(X_t)$. Furthermore, since $Y_t$ does not depend on $Z_{t:T}$, $V_t \equiv 0$ and we obtain
\begin{equation} \label{eq:BSDE_B}
    {\rm d}Y_t = Q_t^{\rm T}{\rm d}B_t
\end{equation}
in line with the backward Kolmogorov equation (\ref{eq:BKE}). Let us summarize our findings in the following lemma.

\begin{lemma}
The estimator (\ref{eq:estimator I}) becomes unbiased for $\tilde{Y}_0 = Y_0$
with $Y_t$, $t\in [0,T]$, defined by the FBSDEs
\begin{subequations} \label{eq:FBSDE_BKE}
\begin{align}
    {\rm d}X_t &=b(X_t){\rm d}t + \sigma {\rm d}B_t, \quad    X_0 \sim \mu,\\
    {\rm d}Y_t &= Q_t^{\rm T}{\rm d}B_t, \qquad \qquad \,\,\, Y_T = f(X_T).
\end{align}
\end{subequations}
The control $\tilde{U}_t$, $t\in [0,T]$, is provided by
\begin{equation}
    \tilde{U}_t = -\tilde{D}_t Y_t h(X_t).
\end{equation}
\end{lemma}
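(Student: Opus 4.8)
The plan is to recognise that the lemma merely repackages the computations already carried out above: it suffices to (i) verify that the factorised process $\tilde{Y}_t := \tilde{D}_t Y_t$, built from the decoupled FBSDE (\ref{eq:FBSDE_BKE}), solves the BSDE (\ref{eq:BSDE1tilde}); (ii) read off the martingale integrand $\tilde{V}_t$ from this factorisation; and (iii) combine the optimal choice $\tilde{U}_t = -\tilde{V}_t$ with the error identity (\ref{eq:estimator_error}) to conclude unbiasedness.

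For step (i), let $(X_t,Y_t,Q_t)$ solve (\ref{eq:FBSDE_BKE}) and let $\tilde{D}_t$ be given by (\ref{eq:FSDE2tilde}) along the same $X_t$. Set $\tilde{Y}_t := \tilde{D}_t Y_t$ and apply It\^o's product rule. Since $B_t$ and $Z_t$ are independent Brownian motions under $\tilde{\mathbb{P}}^\ast$, the process $Y_t$ is driven only by $B_t$ and $\tilde{D}_t$ only by $Z_t$, so their cross-variation vanishes and one obtains ${\rm d}\tilde{Y}_t = \tilde{D}_t Q_t^{\rm T}{\rm d}B_t + \tilde{D}_t Y_t h(X_t)^{\rm T}{\rm d}Z_t$ with terminal value $\tilde{Y}_T = \tilde{D}_T Y_T = \tilde{D}_T f(X_T)$. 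This is exactly (\ref{eq:BSDE1tilde}) with $\tilde{Q}_t = \tilde{D}_t Q_t$ and $\tilde{V}_t = \tilde{D}_t Y_t h(X_t)$; by uniqueness of the martingale representation underlying (\ref{eq:BSDE1tilde}) along (\ref{eq:FSDEstilde}) this is the solution, and evaluating at $t=0$ gives $\tilde{Y}_0 = \tilde{D}_0 Y_0 = Y_0$ since $\tilde{D}_0 = 1$. Step (ii) is then immediate: $\tilde{V}_t = \tilde{D}_t Y_t h(X_t)$.

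For step (iii), the minimiser (\ref{eq:optimal_control}) of the cost functional is $\tilde{U}_t = -\tilde{V}_t = -\tilde{D}_t Y_t h(X_t)$, which is the claimed control. Substituting $\tilde{U}_t = -\tilde{V}_t$ into (\ref{eq:estimator_error}) makes the right-hand side vanish, hence $\tilde{\mathbb{E}}_Z^\ast[S_T^\sigma[f]] = \sigma_T[f]$, i.e. (\ref{eq:estimator I a}). Equivalently, one can integrate ${\rm d}\tilde{Y}_t$ over $[0,T]$ and insert $\tilde{Y}_0 = Y_0$ and $\tilde{U}_t = -\tilde{V}_t$ into (\ref{eq:estimator I}) to get the compact representation $S_T^\sigma[f] = \tilde{D}_T f(X_T) - \int_0^T \tilde{D}_t Q_t^{\rm T}{\rm d}B_t$, whence $\tilde{\mathbb{E}}_Z^\ast[S_T^\sigma[f]] = \sigma_T[f]$ follows by the defining property of $\sigma_T[f]$ once the $B_t$-integral is shown to have zero conditional mean.

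The one point that is not pure bookkeeping is precisely this last claim: $\tilde{\mathbb{E}}_Z^\ast\big[\int_0^T \tilde{D}_t Q_t^{\rm T}{\rm d}B_t\big] = 0$ even though the integrand $\tilde{D}_t$ is itself $Z$-dependent. I would justify it from $B_t$ being a $\tilde{\mathbb{P}}^\ast$-Brownian motion independent of $Z_{0:T}$ together with adaptedness of the integrand — this is exactly the vanishing of the martingale term $\tilde{M}_T$ under $\tilde{\mathbb{E}}_Z^\ast$ already invoked in deriving (\ref{eq:estimator_error}) — and it requires the standing smoothness/growth hypotheses on $b$, $h$, $f$ so that the stochastic integrals are genuine square-integrable martingales rather than mere local martingales. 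This integrability check, which implicitly uses the regularity of the backward Kolmogorov solution $y_t$ in (\ref{eq:BKE}) and the ansatz $\tilde{y}_t(x,d)=d\,y_t(x)$, is the only place where something beyond routine It\^o calculus is needed.
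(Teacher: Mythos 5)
Your proposal is correct and follows essentially the same route as the paper: the factorisation $\tilde{Y}_t=\tilde{D}_tY_t$, the identification $\tilde{Q}_t=\tilde{D}_tQ_t$, $\tilde{V}_t=\tilde{D}_tY_th(X_t)$, the optimal choice $\tilde{U}_t=-\tilde{V}_t$, and unbiasedness via the error identity (\ref{eq:estimator_error}). The only (inessential) difference is directional -- you verify that $\tilde{D}_tY_t$ built from (\ref{eq:FBSDE_BKE}) solves (\ref{eq:BSDE1tilde}) and invoke uniqueness, whereas the paper derives the reduced BSDE for $Y_t$ from (\ref{eq:BSDE1tilde}) and argues $V_t\equiv 0$ since $Y_t$ is independent of $Z_{t:T}$ -- and your closing remark on the square-integrability needed for $\tilde{M}_T$ to vanish matches the paper's implicit assumption.
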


\smallskip

\noindent
We end this section with a brief discussion related to the problem of filter stability. Let us introduce the abbreviation
\begin{equation}\label{eq:averaged estimator}
    \mathcal{S}_T^\sigma [f] := \tilde{\mathbb{E}}_Z^\ast [ S_T^\sigma[f]] = \mu[Y_0]-\int_0^T \tilde{\mathbb{E}}^\ast_Z[\tilde{U}_t]{\rm d}Z_t
\end{equation}
The optimal choice $\tilde U_t = - \tilde V_t$ leads to
\begin{equation}
    \tilde Y_T - \mathcal{S}_T^\sigma [f] = \tilde Y_0 - \mu[\tilde Y_0]
    + \int_0^T \tilde{Q}_t^{\rm T} {\rm d}B_t  +
    \int_0^T (\tilde V_t-\tilde{\mathbb{E}}_Z^\ast [\tilde V_t]) {\rm d}Z_t 
\end{equation}
from which we can recover
\begin{equation}
    \mathcal{S}_T^\sigma [f] = \tilde{\mathbb{E}}_Z^\ast [\tilde Y_T] = \sigma_T[f]
\end{equation}
(unbiasedness) as well as
\begin{subequations}
    \begin{align}
    \tilde{\mathbb{E}}^\ast \left[\left( \tilde Y_T - \sigma_T [f]\right)^2\right] &= 
    \mu \left[\left(\tilde Y_0 - \mu [\tilde Y_0] \right)^2 \right]
    \\ & \qquad \qquad +\,\tilde{\mathbb{E}}^\ast\left[\int_0^T \left( \|\tilde{Q}_t\|^2 + \|\tilde V_t-\tilde{\mathbb{E}}_Z^\ast[\tilde V_t]\|^2
    \right) {\rm d}t\right]
    \end{align}
\end{subequations}
(variance propagation). The last identity gives rise to the differential equation
\begin{equation} \label{eq:backward_variance_DE}
    \frac{\rm d}{{\rm d}t} \tilde{\mathbb{E}}^\ast \left[\left(\tilde Y_t -\sigma_t[f]\right)^2\right]=\tilde{\mathbb{E}}^\ast \left[   \|\tilde Q_t\|^2 + \| \tilde V_t - \tilde{\mathbb{E}}^\ast_Z[\tilde V_t]\|^2\right]
\end{equation}
and the variance of $\tilde Y_t$ is non-increasing as time goes backward from $t=T$ to $t=0$.
The expression  on the right hand side of (\ref{eq:backward_variance_DE}) can be viewed as a (non-stationary) Dirichlet form and {\it carr\'e du champ} operator \cite{book_BGL}, respectively, associated with the FSDE (\ref{eq:FSDEstilde}). Furthermore, under an assumed Poincar\'e-type inequality
\begin{equation}
   c \,\tilde{\mathbb{E}}^\ast \left[ \left( \tilde Y_t - \sigma_t[f]\right)^2\right] \le \tilde{\mathbb{E}}^\ast \left[\| \tilde Q_t\|^2 + \|\tilde V_t-\tilde{\mathbb{E}}^\ast_Z[\tilde V_t]\|^2\right] ,
\end{equation}
$c>0$, the decay becomes exponential. We note that 
\begin{equation}
    \|\tilde Q_t\|^2 + \| \tilde V_t - \tilde{\mathbb{E}}^\ast_Z[\tilde V_t]\|^2
    = \sigma^2 \|\tilde D_t \nabla_x y_t(X_t)\|^2 +\| \tilde D_t  y_t(X_t)h(X_t)-
    \sigma_t[y_t h] \|^2,
\end{equation}
which provides an explicit expression for the decay of variance via (\ref{eq:backward_variance_DE}). It would be of interest to relate these considerations to the problem of filter stability. See the closely related work \cite{KimMehta2023c}. An obvious observation is that an exponential decay in the variance of $Y_t$ implies an exponential decay of variance in $\tilde{Y}_t$.

%
\subsection{Innovation-based estimator} \label{sec:innovation}
%

We now turn to the estimator (\ref{eq:estimator II}). The required BSDE for $\breve{Y}_t$ is given by
\begin{equation} \label{eq:BSDE1}
    {\rm d}\breve{Y}_t = \breve{Q}_t^{\rm T}{\rm d}B_t + \breve{V}_t^{\rm T}{\rm d}I_t,  \quad\,\,\,  \,
    \breve{Y}_T = D_T f(X_T),
\end{equation}
along solutions of the FSDE (\ref{eq:FSDEs1}). We note again that
\begin{subequations} 
\begin{align}
    \mathbb{E}_I^\ast \left[ \left( D_T f(X_T) - S_T^\pi [f]\right)^2 \right]
    &=
    \mathbb{E}_I^\ast \left[ \left( D_T f(X_T) - \breve{Y}_0 +
    \int_0^T \breve{U}_t^{\rm T}{\rm d}I_t \right)^2 \right] \\
    &= \mathbb{E}_I^\ast \left[ \left( \int_0^T \breve{Q}_t^{\rm T} {\rm d}B_t +  
        \int_0^T (\breve{U}_t + \breve{V}_t)^{\rm T} {\rm d}I_t  \right)^2 \right]\\
    &= \mathbb{E}_I^\ast \left[ \int_0^T \left(\|\breve{Q}_t\|^2 + \| \breve{U}_t + \breve{V}_t\|^2\right) {\rm d}t \right] + \breve{M}_T,
\end{align}
\end{subequations}
where $\breve{M}_T$ is a martingale term which vanishes under expectation with respect to $I_{0:T}$. The control $\breve{U}_t$ in (\ref{eq:estimator II}) is chosen to minimize the cost functional
\begin{equation} \label{eq:cost_functional_pi2}
    \mathcal{J}_T(\breve{U}_{0:T}) :=  
     \mathbb{E}^\ast \left[ \int_0^T \left(\|\breve{Q}_t\|^2 + \| \breve{U}_t + \breve{V}_t\|^2\right) {\rm d}t \right],
\end{equation}
which is again minimized by $\breve{U}_t = - \breve{V}_t$ and the associated estimator (\ref{eq:estimator II}) is of minimum variance. 

The time-dependent generator $\mathcal{L}_t$ associated with the FSDE (\ref{eq:FSDEs1}) is provided by
\begin{equation}
\mathcal{L}_t \,g = b^{\rm T}\nabla_x g  
 + \frac{\sigma^2}{2}\nabla_x^2g
+ \frac{d^2}{2} \|h-\pi_t[h]\|^2 \,\partial_d^2g.
\end{equation}
Let the function $\breve{y}_t(x,d)$ satisfy the associated BSPDE
\begin{equation} \label{eq:BSPDE}
-{\rm d}_t  \breve{y}_t = \mathcal{L}_t \breve{y}_t \,{\rm d}t
- \breve{\mathcal{V}}_t^{\rm T} {\rm d}I_t, \qquad \breve{y}_T(x,d) = d \,f(x),
\end{equation}
where the $\breve{\mathcal{V}}_t$ term is needed to make $\breve{y}_t(x,d)$ adapted to the forward process. However, as shown below, $\partial_d^2 \breve{y}_t =0$ and we may conclude that $\breve{\mathcal{V}}_t \equiv 0$. Then It\^o's formula implies
\begin{subequations}
\begin{align}
    {\rm d} \breve{y}_t(X_t,D_t) &= ({\rm d}_t
    \breve{y}_t(X_t,D_t) +
    \mathcal{L}_t \breve{y}_t(X_t,D_t){\rm d}t) 
     \\
    &\qquad +\, \sigma \nabla_x \breve{y}_t(X_t,D_t){\rm d}B_t +
    D_t \partial_d \breve{y}_t(X_t,D_t)  (h(X_t)-\pi_t[h])^{\rm T}{\rm d}I_t \\&=
    \breve{Q}_t^{\rm T} {\rm d}B_t + \breve{V}_t^{\rm T}{\rm d} I_t = 
    {\rm d} \breve{Y}_t.
\end{align}
\end{subequations}
Hence, we find that $\breve{q}_t(x,d) = \sigma \nabla_x \breve{y}_t(x,d)$ as well as
\begin{equation}
    \breve{v}_t(x,d) =   d \,\partial_d \breve{y}_t(x,d)  \,(h(x)-\pi_t[h]).
\end{equation}
Again, we make the {\it ansatz} $\breve{y}_t(x,d) = d \,y_t(x)$ and find that $y_t(x)$ has to satisfy the previously stated backward Kolmogorov equation (\ref{eq:BKE}). Therefore, we also find that $\breve{Q}_t = \breve{q}_t(X_t,D_t)$ with
$\breve{q}_t(x,d) := d\,\sigma \,\nabla_x y_t(x)$ as well as $\breve{V}_t = \breve{v}_t(X_t,D_t)$ with $\breve{v}_t(x,d) = d\,v_t(x)$ where
\begin{equation}
v_t(x) = y_t(x) \, (h(x)-\pi_t[h]).
\end{equation}
The optimal control is given by
\begin{equation} \label{eq:oc1}
\breve{U}_t =  -D_t \,y_t(X_t)\,  (h(X_t)-\mathbb{E}_I^\ast [D_t h(X_t)])
\end{equation}
and (\ref{eq:estimator II}) becomes
\begin{equation} \label{eq:line444}
    S_T^\pi [f]  = y_0(X_0) + \int_0^T 
    D_t \,y_t(X_t)  \,(h(X_t)-\mathbb{E}_I^\ast [D_th(X_t)])^{\rm T}
    {\rm d}I_t .
\end{equation}

Alternatively, using the {\it ansatz} $\breve{Y}_t = D_t Y_t$, one can reformulate the BSDE (\ref{eq:BSDE1}) as a BSDE (\ref{eq:BSDE_W}) along the FSDE (\ref{eq:FSDEs0}) in the new variable $Y_t$. The two BSDEs are now related via $\breve{Q}_t= D_t Q_t$ as well as
\begin{equation}
    \breve{V}_t = D_t V_t + D_t Y_t \left(h(X_t)-\mathbb{E}_I^\ast [D_t h(X_t)]\right)
\end{equation}
and one finds again that $V_t \equiv 0$. Hence the BSDE (\ref{eq:BSDE_W}) reduces to (\ref{eq:BSDE_B}), which corresponds to the backward Kolmogorov equation (\ref{eq:BKE}). Let us summarize our findings in the following lemma.

\begin{lemma}
The estimator (\ref{eq:estimator II}) becomes unbiased for $\breve{Y}_0 = Y_0$
with $Y_t$, $t\in [0,T]$, defined by the FBSDEs (\ref{eq:FBSDE_BKE}) and
the control $\breve{U}_t$, $t\in [0,T]$, is provided by
\begin{equation}
    \breve{U}_t = -D_t Y_t (h(X_t)-\mathbb{E}_I^\ast[D_th(X_t)]).
\end{equation}
\end{lemma}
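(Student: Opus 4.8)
\smallskip
\noindent\textbf{Proof strategy.} The statement is essentially a repackaging of the computations carried out above, so the plan is to assemble them into a short verification argument with three ingredients: unbiasedness of (\ref{eq:estimator II}) under the optimal control, the identification of the process $Y_t$ in the claim with the backward component of the FBSDEs (\ref{eq:FBSDE_BKE}), and the closed form of $\breve U_t$. First I would establish unbiasedness. Inserting the minimiser $\breve U_t = -\breve V_t$ of the cost (\ref{eq:cost_functional_pi2}) into (\ref{eq:estimator II}) and using the BSDE (\ref{eq:BSDE1}) gives
\begin{equation*}
S_T^\pi[f] = \breve Y_0 + \int_0^T \breve V_t^{\rm T}{\rm d}I_t
= \breve Y_T - \int_0^T \breve Q_t^{\rm T}{\rm d}B_t
= D_T f(X_T) - \int_0^T \breve Q_t^{\rm T}{\rm d}B_t .
\end{equation*}
Applying $\mathbb{E}_I^\ast[\,\cdot\,] = \mathbb{E}^\ast[\,\cdot\,|I_{0:T}]$ and using that, under $\mathbb{P}^\ast$, $B_t$ is a Brownian motion independent of $I_{0:T}$ (so that the $B$-integral has vanishing $I_{0:T}$-conditional mean), together with the representation $\pi_T[f] = \mathbb{E}_I^\ast[D_T f(X_T)]$, yields $\mathbb{E}_I^\ast[S_T^\pi[f]] = \pi_T[f]$. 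This is exactly the innovation form of the mean-error identity (\ref{eq:estimator_error}) evaluated at $\breve U_t = -\breve V_t$.

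Next I would produce the solution of the linear BSDE (\ref{eq:BSDE1}) explicitly through the {\it ansatz} $\breve Y_t = D_t Y_t$. Let $y_t$ solve the backward Kolmogorov equation (\ref{eq:BKE}) and set $Y_t := y_t(X_t)$. It\^o's formula along (\ref{eq:FSDE1}) together with $\partial_t y_t + \mathcal{L}^x y_t = 0$ gives ${\rm d}Y_t = Q_t^{\rm T}{\rm d}B_t$ with $Q_t = \sigma\nabla_x y_t(X_t)$, i.e. precisely the FBSDEs (\ref{eq:FBSDE_BKE}); in particular the martingale part of $Y_t$ carries no $W_t$-component, which is the reason $V_t\equiv 0$ in (\ref{eq:BSDE_W}). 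Since $D_t$ is driven by $I_t$ while $Y_t$ is driven by $B_t$, the cross variation $[D,Y]_t$ vanishes, and the It\^o product rule with (\ref{eq:FSDE2}) yields
\begin{equation*}
{\rm d}\breve Y_t = D_t\,Q_t^{\rm T}{\rm d}B_t + D_t\,Y_t\,(h(X_t)-\pi_t[h])^{\rm T}{\rm d}I_t,
\qquad \breve Y_T = D_T Y_T = D_T f(X_T),
\end{equation*}
which is (\ref{eq:BSDE1}) with $\breve Q_t = D_t Q_t$ and $\breve V_t = D_t Y_t(h(X_t)-\pi_t[h])$; in particular $\breve Y_0 = D_0 Y_0 = Y_0$. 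By uniqueness for this linear BSDE the minimiser is $\breve U_t = -\breve V_t = -D_t Y_t(h(X_t)-\pi_t[h])$, and substituting the identity $\pi_t[h] = \mathbb{E}_I^\ast[D_t h(X_t)]$ gives the asserted control formula.

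The genuinely delicate point, as opposed to the algebra, is the justification of $V_t\equiv 0$ and the attendant regularity. The route above sidesteps a martingale representation theorem by exhibiting the solution explicitly, but it then requires that (\ref{eq:BKE}) admit a classical (or sufficiently smooth) solution and that $Y_t=y_t(X_t)$ and $D_t$ have enough integrability for $\int\breve Q_t^{\rm T}{\rm d}B_t$ and $\int\breve V_t^{\rm T}{\rm d}I_t$ to be true (not merely local) martingales, so that the conditional expectations above are legitimate; a more intrinsic route invokes a martingale representation theorem for the filtration generated by $(B_t,I_t)$ and the fact that the terminal datum $D_T f(X_T)$, conditioned on $I_{0:T}$, does not see $W_{0:T}$ directly. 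I would also note that the mean-field coefficient $\pi_t[h]=\mathbb{E}_I^\ast[D_t h(X_t)]$ in (\ref{eq:FSDE2}) must be read, for a fixed innovation path, as the adapted process it is, so that (\ref{eq:FSDEs1}) and hence (\ref{eq:BSDE1}) are well posed; all of this is covered by the smoothness assumptions already in force. The remaining steps are immediate from It\^o's formula and the independence of $B_t$ and $I_t$ under $\mathbb{P}^\ast$.
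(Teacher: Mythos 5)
Your proposal is correct and follows essentially the same route as the paper: minimising the cost (\ref{eq:cost_functional_pi2}) gives $\breve U_t=-\breve V_t$, and the ansatz $\breve Y_t=D_tY_t$ with $Y_t=y_t(X_t)$ solving the backward Kolmogorov equation (\ref{eq:BKE}) identifies $\breve Q_t=D_tQ_t$ and $\breve V_t=D_tY_t(h(X_t)-\mathbb{E}_I^\ast[D_th(X_t)])$, exactly as in the paper's computation leading to (\ref{eq:oc1}) and the reduction of (\ref{eq:BSDE_W}) to (\ref{eq:BSDE_B}). The only (harmless) presentational difference is that you verify the candidate $\breve Y_t=D_t\,y_t(X_t)$ directly by It\^o's product rule, whereas the paper argues $V_t\equiv 0$ from the fact that $Y_t$ does not depend on the future observations before invoking (\ref{eq:FBSDE_BKE}).
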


\smallskip

\noindent
The considerations on filter stability, as put forward in Section  \ref{sec:obs based I}, extend naturally to the innovation-based estimator (\ref{eq:estimator II}). In particular, (\ref{eq:backward_variance_DE}) becomes
\begin{subequations} \label{eq:backward_variance_DE2}
\begin{align}
    &\frac{\rm d}{{\rm d}t} \mathbb{E}^\ast \left[\left(\breve{Y}_t -\pi_t[f]\right)^2\right]\\
    &\qquad =\mathbb{E}^\ast \left[ 
    \sigma^2 \|D_t \nabla_x y_t(X_t)\|^2 +\| D_t y_t(X_t)(h(X_t)-\pi_t[h])-
    \pi_t[y_t (h-\pi_t[h])] \|^2
    \right]
\end{align}
\end{subequations}
with $\breve{Y}_t = D_t \,y_t(X_t)$ and an appropriate Poincar\'e inequality would again establish an exponential decay of variance.  

\begin{example} \upshape
The linear-Gaussian case is a special case of the model~(\ref{eq:FSDEs0}) where $b(x) = A^{\rm T}x$, $h(x) = H^{\rm T} x$ for some $A\in \Re^{n\times n}$ and $H\in \Re^{n\times m}$, and $\mu$ is Gaussian with mean $m_0$ and covariance matrix $\Sigma_0$. Fix $\bar{f} \in \Re^n$ and consider a linear function $f(x) = \bar{f}^{\rm T} x$. In this case, the solution to the backward Kolmogorov equation is also a linear function $y_t(x) = \bar{y}_t^{\rm T} x$ with
\begin{equation} \label{eq:LFPE}
    -\frac{{\rm d}\bar{y}_t}{{\rm d}t} = A\bar{y}_t,\quad \bar{y}_T = \bar{f}.
\end{equation}
Upon introducing the filtered state covariance matrix
 \begin{equation}
 \Sigma_t := \mathbb{E}_I^\ast[D_t X_tX_t^{\rm T}]-\mathbb{E}_I^\ast[D_t X_t]\,\mathbb{E}_I^\ast[D_t X_t]^{\rm T},
 \end{equation}
the optimal control (\ref{eq:oc1}) takes the form
\begin{equation}
    \mathbb{E}_I^*[U_t] = -H^{\rm T} \Sigma_t \bar{y}_t
\end{equation}
and the estimator~(\ref{eq:line444}) becomes
\begin{subequations}
\begin{align}
    \mathcal{S}_T^\pi[f] &:= \mathbb{E}_I^\ast [S_T^\pi[f]]\\
    &= \bar{y}_0^{\rm T}m_0 +\int_0^T \bar{y}_t^{\rm T}\Sigma_t H{\rm d}I_t\\
    &=\bar{f}^{\rm T}e^{T A^{\rm T}}m_0 +\int_0^T \bar{f}^{\rm T} e^{(T-t)A^{\rm T}}\Sigma_t H {\rm d}I_t
\end{align}
\end{subequations}
Define the filtered mean
 \begin{equation} \label{eq:LG-mean}
     m_t := e^{tA^{\rm T}}m_0 + \int_0^t e^{(t-s)A^{\rm T}}\Sigma_s H{\rm d}I_s,  
 \end{equation}
 and observe that $\mathcal{S}_T^\pi[f] = \bar{f}^{\rm T} m_T$. The Kalman--Bucy filter is obtained by differentiate~(\ref{eq:LG-mean}) with respect to $t$:
 \begin{equation}
     {\rm d} m_t = A^{\rm T} m_t {\rm d}t + \Sigma_t H{\rm d}I_t.
 \end{equation}
Please note that, contrary to (\ref{eq:LFPE}), the BSPDE formulations (\ref{eq:BSPDE_K}) leads to the controlled backward Kolmogorov equation
\begin{equation} \label{eq:LFPEU}
    -\frac{{\rm d}\bar{y}_t}{{\rm d}t} = A\bar{y}_t + H \bar{u}_t ,\quad \bar{y}_T = \bar{f}
\end{equation}
instead \cite{KimMehta2022b}. We explore this connection further in the following subsection. \hfill $\Box$
\end{example}
 
%
\subsection{Observation-based estimator II} \label{sec:obII}
%

We now state a BSDE required to define the estimator (\ref{eq:estimator III}). The key observation is again that the backward process $\hat{Y}_t$ is chosen such that 
\begin{equation}
D_T f(X_T) - \hat{S}^\pi_T[f] = \hat{Y}_T - \hat{Y}_0 + \int_0^T \hat{U}_t^{\rm T} {\rm d}Z_t 
\end{equation}
becomes independent of the innovation process $I_{0:T}$ under an optimal choice of the control $\hat{U}_{0:T}$. These considerations lead immediately to the BSDE
\begin{equation} \label{eq:BSDE3}
    {\rm d}\hat{Y}_t = \hat{Q}_t^{\rm T}{\rm d}B_t +
    \hat{V}_t^{\rm T}{\rm d}I_t - \hat{U}_t^{\rm T}\mathbb{E}_I^\ast [D_t h(X_t)]
    {\rm d}t, \qquad \hat{Y}_T = D_T f(X_T),
\end{equation}
along the FSDE (\ref{eq:FSDEs1}) since then
\begin{equation}
D_T f(X_T) - \hat{S}^\pi_T[f] = \int_0^T  \hat{Q}_t^{\rm T}{\rm d}B_t +
\int_0^T (\hat{U}_t + \hat{V}_t)^{\rm T}{\rm d}I_t 
\end{equation}
and the desired control is provided by $\hat{U}_t = -\hat{V}_t$. The same conclusion can be drawn from 
the associated cost function
\begin{equation}
    \hat{\mathcal{J}}_T(\hat{U}_{0:T}) := \mathbb{E}^\ast \left[ \int_0^T \left(\|\hat{Q}_t\|^2 + \| \hat{U}_t + \hat{V}_t\|^2\right) {\rm d}t \right] ,
\end{equation}
which arises from
\begin{subequations}
\begin{align}
      \mathbb{E}_I^\ast\left[ \left( D_T f(X_T) - \hat{S}_T^\pi[f]\right)^2 \right]
    &= \mathbb{E}_I^\ast \left[ \left( D_T f(X_T) - Y_0 +
    \int_0^T \hat{U}_t^{\rm T}{\rm d}I_t \right)^2 \right] \\
    &= \mathbb{E}_I^\ast \left[ \left( \int_0^T \hat{Q}_t^{\rm T} {\rm d}B_t +  
        \int_0^T (\hat{U}_t + \hat{V}_t)^{\rm T} {\rm d}I_t  \right)^2 \right]\\
    &= \mathbb{E}_I^\ast \left[ \int_0^T \left(\|\hat{Q}_t\|^2 + \| \hat{U}_t + \hat{V}_t\|^2\right) {\rm d}t \right] + \hat{M}_T
\end{align}
\end{subequations}
and we have again demonstrated the minimum variance property of the estimator (\ref{eq:estimator III}) up to the martingale term $\hat{M}_T$.

However, contrary to the previous two estimators, the BSDE (\ref{eq:BSDE3}) does no longer lead to a standard backward Kolmogorov-type PDE. This is due to the appearance of the 
\begin{equation}
\hat{U}_t^{\rm T}\mathbb{E}_I^\ast [D_th(X_t)] = -\hat{V}_t^{\rm T}\mathbb{E}_I^\ast [D_th(X_t)]
\end{equation}
drift term in (\ref{eq:BSDE3}). Let us summarize our findings in the following lemma.

\begin{lemma}
The estimator (\ref{eq:estimator III}) becomes unbiased for $\hat{Y}_0$
defined by the FBSDEs
\begin{subequations} 
\begin{align}
        {\rm d}X_t &=b(X_t){\rm d}t + \sigma {\rm d}B_t, \qquad  \qquad \qquad  \qquad \qquad\, \,X_0 \sim \mu,\\ 
        {\rm d} D_t &= D_t \,(h(X_t)-\mathbb{E}_I^\ast[D_t h(X_t)])^{\rm T}{\rm d}I_t, 
        \qquad  \qquad   \, D_0 = 1,\\
    {\rm d}\hat{Y}_t &= \hat{Q}_t^{\rm T}{\rm d}B_t + \hat{V}_t^{\rm T} ({\rm d}I_t + \mathbb{E}_I^\ast[D_th(X_t)]{\rm d}t), \qquad  \hat{Y}_T = D_T f(X_T).
    \end{align}
\end{subequations}
The control $\hat{U}_t$, $t\in [0,T]$, is provided by $\hat{U}_t = -\hat{V}_t$. 
\end{lemma}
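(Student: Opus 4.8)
The plan is to follow the template of Sections~\ref{sec:obs based I} and \ref{sec:innovation}, the only genuinely new ingredient being the compensating drift carried by (\ref{eq:BSDE3}). I would first record the identity ${\rm d}Z_t = {\rm d}I_t + \mathbb{E}_I^\ast[D_t h(X_t)]\,{\rm d}t$, which is just (\ref{eq:innovation}) combined with the representation $\pi_t[h]=\mathbb{E}_I^\ast[D_t h(X_t)]$. Substituting this into the estimator (\ref{eq:estimator III}) rewrites it as $\hat S_T^\pi[f] = \hat Y_0 - \int_0^T \hat U_t^{\rm T}{\rm d}I_t - \int_0^T \hat U_t^{\rm T}\mathbb{E}_I^\ast[D_t h(X_t)]\,{\rm d}t$. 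Integrating the BSDE (\ref{eq:BSDE3}) from $0$ to $T$ and inserting the terminal condition $\hat Y_T = D_T f(X_T)$, the $-\hat U_t^{\rm T}\mathbb{E}_I^\ast[D_t h(X_t)]\,{\rm d}t$ term in the BSDE cancels exactly the drift generated by $\int_0^T \hat U_t^{\rm T}{\rm d}Z_t$, leaving the error identity
\[
D_T f(X_T) - \hat S_T^\pi[f] = \int_0^T \hat Q_t^{\rm T}{\rm d}B_t + \int_0^T (\hat U_t + \hat V_t)^{\rm T}{\rm d}I_t ,
\]
already displayed above the lemma.

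\emph{Unbiasedness.} Next I would apply $\mathbb{E}_I^\ast[\,\cdot\,]$ to both sides of this identity and use $\pi_T[f] = \mathbb{E}_I^\ast[D_T f(X_T)]$. Since $B_{0:T}$ is independent of $I_{0:T}$ under $\mathbb{P}^\ast$ and $\hat Q$ is adapted and square integrable, $\mathbb{E}_I^\ast[\int_0^T \hat Q_t^{\rm T}{\rm d}B_t]=0$; and for the control $\hat U_t = -\hat V_t$ the integrand $\hat U_t+\hat V_t$ vanishes identically, so the ${\rm d}I_t$ integral is also zero. This gives $\mathbb{E}_I^\ast[\hat S_T^\pi[f]] = \pi_T[f]$, i.e.\ (\ref{eq:estimator III a}). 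That $\hat U_t = -\hat V_t$ is the minimum-variance choice then follows, exactly as in the earlier subsections, by squaring the error identity, taking $\mathbb{E}_I^\ast$, and using the It\^o isometry together with $B_{0:T}\perp I_{0:T}$ to obtain $\mathbb{E}_I^\ast[(D_T f(X_T)-\hat S_T^\pi[f])^2] = \mathbb{E}^\ast[\int_0^T(\|\hat Q_t\|^2+\|\hat U_t+\hat V_t\|^2)\,{\rm d}t] + \hat M_T$ with $\hat M_T$ mean zero, minimized pointwise in $\hat U_t$ by $\hat U_t=-\hat V_t$.

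\emph{Well-posedness and the main obstacle.} It remains to justify that the FBSDE system in the lemma admits a unique adapted solution from which $\hat Y_0$ and $\hat V$ are read off. The forward part is the mean-field SDE (\ref{eq:FSDEs1}) for $(X_t,D_t)$, assumed well-posed, and the backward part is a \emph{linear} BSDE whose driver $\hat V_t^{\rm T}\mathbb{E}_I^\ast[D_t h(X_t)]=\hat V_t^{\rm T}\pi_t[h]$ is linear in $\hat V_t$ with adapted coefficient $\pi_t[h]$; under the standing integrability assumptions on $h$ this is covered by standard existence--uniqueness results for decoupled FBSDEs with Lipschitz drivers, or, more explicitly, one may undo the extra drift by the Girsanov change of measure that turns ${\rm d}I_t+\pi_t[h]\,{\rm d}t$ back into ${\rm d}Z_t$ and solve the resulting observation-driven linear BSDE directly. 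The delicate point I would take care to spell out is the interplay between the conditioning operator $\mathbb{E}_I^\ast$ and the two stochastic integrals: one must verify that $\hat Q$ and $\hat V$ are adapted to the \emph{full} filtration generated by $(B,I)$ so that the conditional expectation of $\int\hat Q^{\rm T}{\rm d}B$ indeed vanishes, and one must note that $\mathbb{E}_I^\ast[\int_0^T(\hat U_t+\hat V_t)^{\rm T}{\rm d}I_t]$ is \emph{not} zero for a generic control (it is the conditional expectation of an integral against the conditioning noise itself), so unbiasedness is intrinsically tied to the optimal choice $\hat U_t=-\hat V_t$ asserted in the lemma. Finally, in contrast with Sections~\ref{sec:obs based I}--\ref{sec:innovation}, I would point out that the ansatz $\hat Y_t = D_t Y_t$ does \emph{not} reduce the BSDE (\ref{eq:BSDE3}) to the backward Kolmogorov equation (\ref{eq:BKE}) here, precisely because the compensating drift couples $\hat V_t$ with $\pi_t[h]$; that is the structural feature distinguishing this estimator from the previous ones.
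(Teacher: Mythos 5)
Your proposal is correct and follows essentially the same route as the paper: rewrite ${\rm d}Z_t={\rm d}I_t+\mathbb{E}_I^\ast[D_th(X_t)]{\rm d}t$ so that the compensating drift in (\ref{eq:BSDE3}) cancels, obtain the error identity $D_Tf(X_T)-\hat S_T^\pi[f]=\int_0^T\hat Q_t^{\rm T}{\rm d}B_t+\int_0^T(\hat U_t+\hat V_t)^{\rm T}{\rm d}I_t$, take $\mathbb{E}_I^\ast$ using the independence of $B$ and $I$ under $\mathbb{P}^\ast$, and note that $\hat U_t=-\hat V_t$ annihilates the ${\rm d}I_t$ integral, with the same quadratic cost argument for minimum variance. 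One small caveat: your remark that unbiasedness is \emph{intrinsically} tied to $\hat U_t=-\hat V_t$ is slightly too strong, since the paper observes that the averaged control (\ref{eq:oc3}) also yields an unbiased estimator; this does not affect the lemma itself.
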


\smallskip

\noindent
Let us again discuss some implications of the proposed estimator and
introduce the abbreviation
\begin{equation}
    \hat{\mathcal{S}}_T^\pi[f] := \mathbb{E}_I^\ast [ \hat{S}_T^\pi[f]] =\mu[\hat{Y_0}] -\int_0^T \mathbb{E}_I^\ast [\hat{U}_t]^{\rm T} {\rm d}Z_t,
\end{equation}
which implies
\begin{subequations} \label{eq:ttt}
    \begin{align}
    \hat Y_T - \hat{\mathcal{S}}_T^\pi[f] &= \hat Y_0 - \mu[\hat Y_0]
    + \int_0^T \hat{Q}_t^{\rm T} {\rm d}B_t \\
    & \qquad \qquad +\,
    \int_0^T (\hat V_t+\mathbb{E}_I^\ast [\hat U_t])^{\rm T} {\rm d}I_t + \int_0^T
    (\hat U_t -\mathbb{E}_I^\ast [ \hat U_t])^{\rm T} \pi_t[h]{\rm d}t
    \end{align}
\end{subequations}
The choice $\hat U_t = - \hat V_t$ leads to
\begin{equation}
    \hat{\mathcal{S}}_T^\pi[f] = \mathbb{E}_I^\ast [\hat Y_T] = \pi_T[f]
\end{equation}
as well as
\begin{subequations} \label{eq:mv5}
    \begin{align}
    \mathbb{E}^\ast \left[\left( \hat Y_T - \hat{\mathcal{S}}_T^\pi[f] \right)^2\right] &= 
    \mu \left[\left(\hat Y_0 - \mu [\hat Y_0] \right)^2 \right]\\ &\qquad \qquad
    + \, \mathbb{E}^\ast \left[ \int_0^T \left( \|\hat{Q}_t\|^2 + \|\hat V_t-\mathbb{E}_I^\ast [\hat V_t]\|^2
    \right)
    {\rm d}t \right] \\ \label{eq:variance reduction 3}
    &\qquad \qquad +\, \mathbb{E}^\ast \left[ \left(\int_0^T
    (\hat V_t -\mathbb{E}_I^\ast [\hat V_t])^{\rm T} \pi_t[h]{\rm d}t \right)^2 \right] .
    \end{align}
\end{subequations}
Furthermore, if one applies the averaged control
\begin{equation} \label{eq:oc3}
    \hat{U}_t = -\mathbb{E}_I^\ast [\hat{V}_t]
\end{equation}
in the BSDE (\ref{eq:BSDE3}) directly, then (\ref{eq:ttt}) turns into
\begin{equation}
    \hat Y_T - \hat{\mathcal{S}}_T^\pi[f] = \hat Y_0 - \mu[\hat Y_0]
    + \int_0^T \hat{Q}_t^{\rm T} {\rm d}B_t +
    \int_0^T (\hat V_t-\mathbb{E}_I^\ast [\hat V_t])^{\rm T} {\rm d}I_t .
\end{equation}
Hence the integral term (\ref{eq:variance reduction 3}) vanishes and the resulting (\ref{eq:mv5}) reveals the more common minimum variance/optimal control aspect of the averaged control (\ref{eq:oc3}). We also note that the resulting estimator (\ref{eq:estimator III}) remains unbiased under the averaged control (\ref{eq:oc3}). 

Let us finally connect the discussion so far to the work 
of \cite{Kim22,KimMehta2022b}, which is based on the estimator (\ref{eq:est})
and the BSPDE (\ref{eq:BSPDE_K}). We note that $Z_t$ is not Brownian motion with respect to the path measure $\mathbb{P}$ generated by the FSDEs (\ref{eq:state_n})-(\ref{eq:output_n}). Hence we first reformulate (\ref{eq:BSPDE_K}) into
the BSDE 
\begin{equation} \label{eq:BSDE3c}
    {\rm d}Y_t = Q_t^{\rm T}{\rm d}B_t +
    V_t^{\rm T}{\rm d}W_t - \mathcal{U}_t^{\rm T}h(X_t)\,
    {\rm d}t, \qquad Y_T = f(X_T).
\end{equation}
along the FSDE (\ref{eq:FSDEs0}). In a next step, we obtain
\begin{equation}
    Y_T - \mathcal{S}_T 
    =Y_0 - \mu[Y_0] + \int_0^T
    Q_t^{\rm T}{\rm d}B_t + \int_0^T (V_t + \mathcal{U}_t)^{\rm T} {\rm d}W_t
\end{equation}
and
\begin{equation} \label{eq:cost_functiona_optimal_control}
    \mathbb{E} \left[ (Y_T-\mathcal{S}_T)^2 \right] = 
    \mathbb{E}\left[(Y_0-\mu[Y_0])^2 \right] + \mathbb{E} \left[
    \int_0^T \left( \|Q_t\|^2 + \|V_t + \mathcal{U}_t\|^2 \right) {\rm d}t
    \right] .
\end{equation}
Please note that the optimal control is {\it not} provided by
$\mathcal{U}_t = -\mathbb{E}[V_t|Z_{0:t}]$. In fact, we introduce
\begin{equation}
\check{Y}_t := D_t Y_t
\end{equation}
and, using (\ref{eq:FSDEs1}) and (\ref{eq:BSDE3c}), obtain
the BSDE
\begin{equation}\label{eq:1b}
{\rm d} \check{Y}_t  = \check{Q}_t^{\rm T}{\rm d}B_t + \check{V}_t^{\rm T}{\rm d}I_t
- D_t \,\mathcal{U}_t^{\rm T}h(X_t){\rm d}t, \qquad \check{Y}_T = D_T f(X_T),
\end{equation}
with $\check{V}_t$ and $\check{Q}_t$ satisfying
\begin{equation}
\check{V}_t =  D_t \left\{V_t + Y_t (h(X_t) - \pi_t[h]) \right\}
\end{equation}
and $\check{Q}_t = D_t Q_t$, respectively.
Note that the BSDE (\ref{eq:1b}) implies
\begin{equation} \label{eq:backward_estimator}
    \pi_T[f] = \mu[\check{Y}_0] +
    \int_0^T \mathbb{E}^\ast_I [\check{V}_t]^{\rm T}
    {\rm d}I_t - \int_0^T \mathcal{U}_t^{\rm T} \pi_t[h] {\rm d}t
\end{equation} 
and we can characterize the dependence of the estimation error on $\mathcal{U}_t$:
\begin{equation} 
\mathcal{S}_T - \pi_T[f]  = -\int_0^T \left( \mathcal{U}_t + \mathbb{E}^\ast_I 
\left[\check{V}_t\right]\right)^{\rm T} {\rm d}I_t.
\end{equation}
The error becomes zero (unbiased estimator) for
\begin{equation} \label{eq:oc2}
    \mathcal{U}_t =  -
    \mathbb{E}_I^\ast [\check{V}_t] =
    -\mathbb{E}_I^\ast [D_t V_t + D_t Y_t (h(X_t) - \pi_t[h])],
\end{equation}
which also provides the minimizer of (\ref{eq:cost_functiona_optimal_control}) (minimum variance estimator) in agreement with the results from \cite{KimMehta2022b}. 

Please also compare the BSDE formulations (\ref{eq:BSDE3}) to the BSDE formulation (\ref{eq:1b}), which are both along the FSDE (\ref{eq:FSDEs1}). In particular, $\hat{Q}_t$ corresponds formally to $\check{Q}_t$ and $\hat{V}_t$ to $\check{V}_t$, respectively. There remains a difference in the use of either $\hat{U}_t^{\rm T}\mathbb{E}_I^\ast[D_t h(X_t)]$ or $\mathcal{U}_t^{\rm T}(D_t h(X_t))$, respectively, as the additional drift term. However, since the representation of an unbiased estimator is unique (Proposition 2.31 in \cite{bain2008fundamentals}), the resulting estimators are equivalent, that is, $\hat{\mathcal{S}}_T^\pi[f] = \mathcal{S}_T[f]$.

\begin{example} \upshape
In the linear Gaussian case, $V_t \equiv 0$ in (\ref{eq:BSDE3c}) and the BSDE gives rise to the controlled backward Kolmogorov equation (\ref{eq:LFPEU}). 
Furthermore, the control (\ref{eq:oc2}) reduces to
\begin{equation}
    \bar{u}_t = -H^{\rm T} \Sigma_t \bar{y}_t,
\end{equation}
where $\bar{y}_t$ now satisfies the closed loop backward equation 
\begin{equation}
    -\frac{{\rm d}\bar{y}_t}{{\rm d}t} = A\bar{y}_t - H H^{\rm T}
    \Sigma_t \bar{y}_t ,\quad \bar{y}_T = \bar{f}
\end{equation}
in line with the dual optimal control perspective of \cite{kalman1960new,kalman1961new,Kim22}. \hfill $\Box$
\end{example}

%
\subsection{Observation error based estimator}
%

We finally discuss the rather unconventional estimator (\ref{eq:estimator IV}) with the observation error defined by (\ref{eq:obs_error}).
We again employ the FSDE (\ref{eq:FSDEstilde}) and introduce the associated BSDE
\begin{equation}
    {\rm d}\bar Y_t = \bar{Q}_t^{\rm T} {\rm d}B_t + \bar{V}_t^{\rm T}{\rm d}Z_t
    + \bar{U}_t^{\rm T}h(X_t){\rm d}t, \qquad \bar{Y}_T = \tilde D_T f(X_T).
\end{equation}
Hence
\begin{equation}
    \bar{Y}_T - \bar{S}_T^\sigma[f] = 
    \int_0^T \bar{Q}_t^{\rm T}{\rm d}B +
    \int_0^T (\bar{V}_t+ \bar{U}_t)^{\rm T}{\rm d}Z_t
\end{equation}
from which we conclude that
\begin{equation}
     \tilde{\mathbb{E}}^\ast_Z[\bar{S}_T^\sigma[f]]= \tilde{\mathbb{E}}^\ast_Z[\bar Y_T] = \sigma_T[f]
\end{equation}
provided that $\bar{U}_t = - \bar{V}_t$. 

Let us introduce a second BSDE for $Y_t$ defined by $\bar Y_t = \tilde D_t Y_t$. The BSDE is given by
\begin{equation}
    {\rm d}Y_t = Q_t^{\rm T} {\rm d}B_t + V_t^{\rm T}{\rm d}W_t
    + U_t^{\rm T}h(X_t){\rm d}t, \qquad Y_T = f(X_T),
\end{equation}
and the following identities hold:
\begin{equation}
    \bar{Q}_t = \tilde D_t Q_t, \quad
    \bar{V}_t = \tilde{D}_t (V_t + Y_t h(X_t)), \quad
    \bar{U}_t = \tilde{D}_t U_t.
\end{equation}
Using the optimal control $\bar{U}_t = - \bar{V}_t$, we obtain the closed
FBSDE system consisting of the FSDE (\ref{eq:FSDEs0}) and the BSDE
\begin{equation}
    {\rm d}Y_t = Q_t^{\rm T} {\rm d}B_t + V_t^{\rm T}{\rm d}W_t
    - (V_t + Y_t h(X_t))^{\rm T}h(X_t){\rm d}t, \qquad Y_T = f(X_T).
\end{equation}
It holds that $V_t \equiv 0$ and one obtains the Feynman-Kac type BSDE \cite{Pavliotis2016,Carmona}
\begin{equation}
    {\rm d}Y_t = Q_t^{\rm T} {\rm d}B_t - Y_t h(X_t)^{\rm T}h(X_t){\rm d}t, \qquad Y_T = f(X_T)
\end{equation}
along the FSDE (\ref{eq:state_n}) with associated backward PDE
\begin{equation} \label{eq:Feynman-Kac}
    -\partial_t y_t = \mathcal{L}^x y_t - \|h\|^2 y_t, \qquad y_T = f.
\end{equation}
Let us summarize our findings in the following lemma.

\begin{lemma}
The estimator (\ref{eq:estimator IV}) becomes unbiased for $\bar{Y}_0$
defined by $\bar{Y}_0 = y_0(X_0)$, where $y_t$ satisfies the backward PDE (\ref{eq:Feynman-Kac}) and the control $\bar{U}_t$, $t\in [0,T]$, is provided by 
\begin{equation}
\bar{U}_t = -\tilde{D}_t y_t(X_t) h(X_t).
\end{equation}
\end{lemma}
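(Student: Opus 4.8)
The plan is to mirror the derivations of Sections~\ref{sec:obs based I}--\ref{sec:obII}, the only genuinely new feature being an extra drift that will appear in the backward equation. First I would introduce, along the FSDEs~(\ref{eq:FSDEstilde}), the BSDE
\begin{equation}
{\rm d}\bar{Y}_t = \bar{Q}_t^{\rm T}{\rm d}B_t + \bar{V}_t^{\rm T}{\rm d}Z_t + \bar{U}_t^{\rm T}h(X_t)\,{\rm d}t,\qquad \bar{Y}_T = \tilde{D}_T f(X_T),
\end{equation}
and observe that the drift $\bar{U}_t^{\rm T}h(X_t)\,{\rm d}t$ is inserted exactly so that the term $\int_0^T \bar{U}_t^{\rm T}{\rm d}W_t$ coming from the estimator~(\ref{eq:estimator IV}) and this drift combine, via $W_t = Z_t - \int_0^t h(X_s)\,{\rm d}s$, into $\int_0^T \bar{U}_t^{\rm T}{\rm d}Z_t$, giving $\bar{Y}_T - \bar{S}_T^\sigma[f] = \int_0^T \bar{Q}_t^{\rm T}{\rm d}B_t + \int_0^T(\bar{V}_t+\bar{U}_t)^{\rm T}{\rm d}Z_t$. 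The choice $\bar{U}_t = -\bar{V}_t$ removes the $Z$-integral; since $B$ and $Z$ are independent Brownian motions under $\tilde{\mathbb{P}}^\ast$, the surviving term $\int_0^T\bar{Q}_t^{\rm T}{\rm d}B_t$ has vanishing conditional expectation given $Z_{0:T}$, and therefore $\tilde{\mathbb{E}}_Z^\ast[\bar{S}_T^\sigma[f]] = \tilde{\mathbb{E}}_Z^\ast[\bar{Y}_T] = \tilde{\mathbb{E}}_Z^\ast[\tilde{D}_T f(X_T)] = \sigma_T[f]$, which is the asserted unbiasedness; the same choice also minimises the cost functional $\tilde{\mathbb{E}}^\ast[\int_0^T(\|\bar{Q}_t\|^2+\|\bar{U}_t+\bar{V}_t\|^2)\,{\rm d}t]$, so the estimator is of minimum variance.

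Next I would pass to the reduced variable through $\bar{Y}_t = \tilde{D}_t Y_t$. Applying It\^o's product rule with ${\rm d}\tilde{D}_t = \tilde{D}_t h(X_t)^{\rm T}{\rm d}Z_t$ exactly as in Section~\ref{sec:obs based I}, and writing ${\rm d}Y_t = Q_t^{\rm T}{\rm d}B_t + V_t^{\rm T}{\rm d}W_t + U_t^{\rm T}h(X_t)\,{\rm d}t$ with $Y_T = f(X_T)$, the cross-variation contribution $\tilde{D}_t h(X_t)^{\rm T}V_t\,{\rm d}t$ cancels the $-\tilde{D}_t V_t^{\rm T}h(X_t)\,{\rm d}t$ produced by ${\rm d}W_t = {\rm d}Z_t - h(X_t)\,{\rm d}t$, leaving the identifications $\bar{Q}_t = \tilde{D}_t Q_t$, $\bar{V}_t = \tilde{D}_t(V_t + Y_t h(X_t))$ and $\bar{U}_t = \tilde{D}_t U_t$. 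The optimal control $\bar{U}_t = -\bar{V}_t$ then reads $U_t = -(V_t + Y_t h(X_t))$, and substituting it into the $Y$-BSDE yields, along the FSDEs~(\ref{eq:FSDEs0}), the closed equation
\begin{equation}
{\rm d}Y_t = Q_t^{\rm T}{\rm d}B_t + V_t^{\rm T}{\rm d}W_t - (V_t + Y_t h(X_t))^{\rm T}h(X_t)\,{\rm d}t,\qquad Y_T = f(X_T).
\end{equation}

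To finish I would argue that the $W$-component vanishes. Since the terminal datum $f(X_T)$ depends only on the signal, which is driven by $B$ alone, the Markov property of $(X_t)$ forces $Y_t = y_t(X_t)$ for a deterministic function $y_t$, whose It\^o expansion carries no ${\rm d}W_t$ term; matching diffusion coefficients then gives $V_t \equiv 0$ and $Q_t = \sigma\nabla_x y_t(X_t)$. The $Y$-BSDE collapses to the Feynman--Kac type equation ${\rm d}Y_t = Q_t^{\rm T}{\rm d}B_t - \|h(X_t)\|^2 Y_t\,{\rm d}t$, $Y_T = f(X_T)$, whose Markovian solution $y_t$ solves the backward PDE~(\ref{eq:Feynman-Kac}) by the Feynman--Kac formula. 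Since $\tilde{D}_0 = 1$, this gives $\bar{Y}_0 = Y_0 = y_0(X_0)$, and $\bar{U}_t = -\bar{V}_t = -\tilde{D}_t Y_t h(X_t) = -\tilde{D}_t y_t(X_t)h(X_t)$, which is the claim of the lemma.

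The hard part will be this last step: rigorously justifying the reduction $V_t \equiv 0$ together with the identification with the backward PDE~(\ref{eq:Feynman-Kac}) requires well-posedness of the (here decoupled) forward--backward system and enough regularity of $b$, $\sigma$, $h$ and $f$ for the Markovian value function $y_t$ to be of class $C^{1,2}$, so that It\^o's formula and the Feynman--Kac representation are legitimate; these are the standing smoothness assumptions already used implicitly in the analysis of estimators I--III. Everything else is a routine repetition of the stochastic-calculus manipulations carried out in those subsections, the only structural novelty being the quadratic drift $-\|h(X_t)\|^2 Y_t\,{\rm d}t$, which is precisely what replaces the backward Kolmogorov equation~(\ref{eq:BKE}) by a Feynman--Kac PDE.
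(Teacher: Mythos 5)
Your proposal is correct and follows essentially the same route as the paper: the same BSDE ${\rm d}\bar Y_t = \bar Q_t^{\rm T}{\rm d}B_t + \bar V_t^{\rm T}{\rm d}Z_t + \bar U_t^{\rm T}h(X_t){\rm d}t$ with terminal value $\tilde D_T f(X_T)$, the same cancellation via $W_t = Z_t - \int_0^t h(X_s){\rm d}s$ giving unbiasedness for $\bar U_t = -\bar V_t$, the same reduction $\bar Y_t = \tilde D_t Y_t$ with the identifications $\bar Q_t = \tilde D_t Q_t$, $\bar V_t = \tilde D_t(V_t + Y_t h(X_t))$, $\bar U_t = \tilde D_t U_t$, and the same conclusion $V_t \equiv 0$ leading to the Feynman--Kac PDE (\ref{eq:Feynman-Kac}). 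Your added remarks on the Markov-property justification of $V_t \equiv 0$ and the regularity needed for the Feynman--Kac representation only make explicit what the paper leaves implicit.
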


%
\section{Application: Optimal control of partially observed diffusion} \label{sec:SOC}
%

In this section, we discuss an application of the estimator (\ref{eq:estimator II})
to partially observed stochastic optimal control problems \cite{Bensoussan92,Borkar}. More specifically, consider the controlled diffusion process
\begin{equation} \label{eq:controlled_SDE}
{\rm d}X_t = b(X_t,\alpha_t){\rm d}t + \sigma {\rm d}B_t
\end{equation}
subject to minimising the cost function
\begin{equation} \label{eq:OC_cost_full}
\mathcal{V}_T(\alpha_{0:T}) 
= \mathbb{E}\left[\int_0^T c_t(X_t,\alpha_t) {\rm d}t
+ f(X_T) \right]
\end{equation}
for given $c_t(x,\alpha)$ over all admissible controls $\alpha_{0:T}$.
Since we wish to condition the control on the available data, we introduce the separated cost function
\begin{equation} \label{eq:OC_cost}
\mathcal{V}_T(\alpha_{0:T}|I_{0:T}) 
= \mathbb{E}_I^\ast \left[ \int_0^T D_t c_t(X_t,\alpha_t){\rm d}t +
D_T f(X_T)\right] .
\end{equation}
Note that (\ref{eq:OC_cost}) corresponds to the conditional expectation value (\ref{eq:general_expectation}) for fixed control.

\begin{example} \upshape \label{ex:control}
A standard example is provided by adding an additive control, that is,
\begin{equation}
    b(x,\alpha) = \tilde{b}(x) + G \alpha,
\end{equation}
where $G$ denotes an appropriate matrix and $\tilde{b}$ a control independent drift, and applying a quadratic cost function $c_t(x,\alpha) = \frac{1}{2} \|\alpha\|^2$. \hfill $\Box$
\end{example}

\noindent
Upon extending the techniques developed in Section \ref{sec:innovation}, we construct an estimator of the form
\begin{equation}  \label{eq:cost_estimagor}
    \mathcal{V}_T(\alpha_{0:T}|I_{0:T}) = \mu[\breve{Y}_0] + \int_0^T \mathbb{E}_I^\ast [ \breve{V}_t]^{\rm T}{\rm d}I_t,
\end{equation}
where $(\breve{Y}_t,\breve{V}_t)$ satisfy an appropriate generalization of the BSDE (\ref{eq:BSDE1}), that is,
\begin{equation} \label{eq:OC_estimator}
{\rm d}\breve{Y}_t = \breve{Q}_t^{\rm T}{\rm d}B_t + \breve{V}_t^{\rm T}{\rm d}I_t - D_t c_t(X_t,\alpha_t){\rm d}t, \qquad \breve{Y}_T = D_T f(X_T),
\end{equation}
along the FSDE (\ref{eq:FSDEs1}) with the drift term in (\ref{eq:FSDE1}) now being given by $b(X_t,\alpha_t)$.
Note that $\mu[\breve{Y}_0] = \mathcal{V}_T(\alpha_{0:T})$. 

Following the weak formulation of stochastic optimal control \cite{Bensoussan,Carmona}, we introduce 
\begin{equation}
\bar{B}_t = B_t + \sigma^{-1} \int_0^t b(X_s,\alpha_s){\rm d}s
\end{equation}
and recall that $\bar{B}_t$ is Brownian motion under a modified probability measure $\bar{\mathbb{P}}^\ast$ according to Girsanov's theorem \cite{Pavliotis2016}. Hence the FSDEs
(\ref{eq:FSDE1})-(\ref{eq:FSDE2}) become
\begin{subequations} \label{eq:FSDE_alpha}
    \begin{align} \label{eq:FSDE1_alpha}
        {\rm d}\bar{X}_t &= \sigma {\rm d}\bar{B}_t, \qquad  \qquad \qquad  \qquad \qquad \qquad \qquad \qquad 
        \qquad \quad  \bar{X}_0 \sim \mu,\\ \label{eq:FSDE2_alpha}
        {\rm d} \bar{D}_t &= \bar{D}_t \,(h(\bar{X}_t)-\pi_t[h])^{\rm T}{\rm d}I_t + \sigma^{-1} \bar{D}_t \, b(\bar{X}_t,\alpha_t)^{\rm T} {\rm d}\bar{B}_t, \quad \,\,
             \bar{D}_0 = 1.
     \end{align}
\end{subequations}
The associated BSDE is provide by
\begin{equation}\label{eq:OC_estimator_alpha}
{\rm d}\bar{Y}_t = \bar{Q}_t^{\rm T}{\rm d}\bar{B}_t + \bar{V}_t^{\rm T}{\rm d}I_t - 
\bar{D}_t c_t(\bar{X}_t,\alpha_t) {\rm d}t, \quad \bar{Y}_T = \bar{D}_T f(\bar{X}_T).
\end{equation}
In order to find the desired optimal control, we make the {\it ansatz} $\bar{Y}_t = \bar{D}_t Y_t$, which turns the BSDE (\ref{eq:OC_estimator_alpha}) into the BSDE
\begin{equation} \label{eq:BSDE_OC}
{\rm d}Y_t = Q_t^{\rm T} {\rm d}\bar{B}_t + V_t^{\rm T}{\rm d}W_t - \left\{ c_t(\bar{X}_t,\alpha_t)+ \sigma^{-1} Q_t^{\rm T} b(\bar{X}_t,\alpha_t) \right\}{\rm d}t, \quad Y_T = f(\bar{X}_T),
\end{equation}
along the FSDEs (\ref{eq:FSDE1_alpha}) and 
\begin{align} \label{eq:FSDE3_alpha}
    {\rm d} \bar{D}_t &= \bar{D}_t \left\{
    (h(\bar{X}_t)-\pi_t[h])^{\rm T}{\rm d}W_t 
    + \|h(\bar{X}_t)-\pi_t[h]||^2{\rm d}t
    + \sigma^{-1} b(\bar{X}_t,\alpha_t)^{\rm T} {\rm d}\bar{B}_t
    \right\}.
\end{align}
Compare the previously derived BSDE (\ref{eq:BSDE_W}) and the discussion from Section \ref{sec:innovation}. Recall that $\mu[\breve{Y}_0]=\mu[\bar{Y}_0] = \mu[Y_0]$, $\bar{Q}_t = \bar{D}_t Q_t$, and $\bar{V}_t = \bar{D}_t (V_t + Y_t (h(X_t)-\pi_t[h])$.

Let us now consider the case of optimal Markov policies/state controls of the form $\alpha_t = a_t(\bar{X}_t)$ for suitable functions $a_t(x)$. Upon applying the comparison theorem \cite{Carmona} to (\ref{eq:BSDE_OC}) and using the fact that $\bar{X}_t$ is independent of the chosen control, we obtain
\begin{equation} \label{eq:optimlity}
    \alpha_t = \arg \min_{\alpha}
    \left\{ c_t(\bar{X}_t,\alpha) + \sigma^{-1} 
    Q_t^{\rm T} b(\bar{X}_t,\alpha)
    \right\} .
\end{equation}

\begin{example} \upshape 
Following the setting from Example \ref{ex:control}, we
obtain the optimal control
\begin{equation}
\alpha_t = - \sigma^{-1} G^{\rm T}Q_t.
\end{equation}
The final step is to determine $Q_t$ as a function of $\bar{X}_t$. \hfill $\Box$
\end{example}

\noindent
We note that the optimal $\alpha_t$ does not depend on $\bar D_t$
and, hence, $V_t \equiv 0$ in (\ref{eq:BSDE_OC}) implying that the FSDE (\ref{eq:FSDE3_alpha}) can be dropped when considering 
the BSDE (\ref{eq:BSDE_OC}). Hence, the desired state control is of the form $\alpha_t = a_t (\bar{X}_t)$ with $a_t(x)$ an appropriate deterministic function defined in terms of $Q_t = \sigma \nabla_x y_t(\bar{X}_t)$, where $y_t(x)$ satisfies the well-known Hamilton--Jacobi--Bellman (HJB) equation \cite{Borkar,Bensoussan92,Carmona}
\begin{equation} \label{eq:bPDF}
-\partial_t y_t (x) = \mathcal{L}^{x,\alpha_t} y_t(x) + c_t(x,a_t(x)), \qquad y_T(x) = f(x).
\end{equation}
Here $\mathcal{L}^{x,\alpha_t}$ denotes now the generator of the SDE (\ref{eq:controlled_SDE}) with feedback control $\alpha_t = a_t(x)$. It holds that  $\mu[Y_0] = \mu[y_0]$.

\begin{lemma} The optimal state control law, which yields the optimal 
(\ref{eq:OC_cost}) along given data $Z_{0:T}$, is provided by
\begin{equation} \label{eq:AT}
    a_t(x) = \arg \min_{\alpha}
    \left\{ c_t(x,\alpha) + (\nabla_x y_t(x))^{\rm T} 
    b(x,\alpha)    \right\},
\end{equation}
where $y_t(x)$ satisfies the HJB equation (\ref{eq:bPDF}).
\end{lemma}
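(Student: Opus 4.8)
The plan is to assemble the Lemma from the ingredients already developed in Section~\ref{sec:SOC}, treating the three pieces---unbiasedness of the estimator, the weak/Girsanov reformulation, and the reduction to a deterministic HJB equation---as a single logical chain. First I would recall that for any fixed admissible control $\alpha_{0:T}$, the BSDE (\ref{eq:OC_estimator}) together with the FSDE (\ref{eq:FSDEs1}) gives, by taking $\mathbb{E}_I^\ast$ of both sides and using that the $\breve Q_t^{\rm T}{\rm d}B_t$ term is a martingale under $\mathbb{P}^\ast$, the identity $\mathbb{E}_I^\ast[\breve Y_T] = \mathbb{E}_I^\ast[\breve Y_0] + \int_0^T \mathbb{E}_I^\ast[\breve V_t]^{\rm T}{\rm d}I_t - \int_0^T \mathbb{E}_I^\ast[D_t c_t(X_t,\alpha_t)]{\rm d}t$; since $\breve Y_T = D_T f(X_T)$, this is exactly the estimator (\ref{eq:cost_estimagor}) and it evaluates $\mathcal{V}_T(\alpha_{0:T}|I_{0:T})$ correctly --- this is the ``unbiasedness for free'' that Section~\ref{sec:innovation} established for the $c_t\equiv 0$ case, carried over verbatim.

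Next I would carry out the Girsanov change of measure to the weak formulation: introducing $\bar B_t$ as in the excerpt makes $\bar X_t$ a control-independent diffusion (\ref{eq:FSDE1_alpha}), and the cost/terminal data get absorbed into the BSDE (\ref{eq:BSDE_OC}) for $Y_t$ via the \emph{ansatz} $\bar Y_t = \bar D_t Y_t$, with the control entering only through the running term $c_t(\bar X_t,\alpha_t) + \sigma^{-1}Q_t^{\rm T}b(\bar X_t,\alpha_t)$. The crucial structural observation, already flagged in the text, is that $\mu[\breve Y_0] = \mu[\bar Y_0] = \mu[Y_0] = \mathcal{V}_T(\alpha_{0:T})$, so minimizing the unconditional cost is the same as minimizing $\mu[Y_0]$ over the BSDE (\ref{eq:BSDE_OC}). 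I would then invoke the comparison theorem for BSDEs \cite{Carmona}: since the driver of (\ref{eq:BSDE_OC}) is, pointwise in $(\omega,t,Q)$, minimized by choosing $\alpha$ to minimize $c_t(\bar X_t,\alpha) + \sigma^{-1}Q_t^{\rm T}b(\bar X_t,\alpha)$, and since $\bar X_t$ does not depend on $\alpha$, the pointwise-optimal Markov policy $\alpha_t = a_t(\bar X_t)$ from (\ref{eq:optimlity}) yields the $\omega$-wise smallest $Y_0$, hence the smallest $\mu[Y_0]$. Substituting $Q_t = \sigma\nabla_x y_t(\bar X_t)$ turns $\sigma^{-1}Q_t^{\rm T}b$ into $(\nabla_x y_t(x))^{\rm T}b(x,\alpha)$, which is exactly the bracket in (\ref{eq:AT}).

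Finally I would close the loop by showing $y_t(x)$ solves the HJB equation (\ref{eq:bPDF}): with the feedback $\alpha_t = a_t(\bar X_t)$ substituted, (\ref{eq:BSDE_OC}) becomes a Markovian BSDE along (\ref{eq:FSDE1_alpha}) --- or equivalently along the closed-loop SDE (\ref{eq:controlled_SDE}) with $\alpha_t = a_t(X_t)$, after undoing Girsanov --- whose nonlinear Feynman--Kac representation is precisely the semilinear backward PDE $-\partial_t y_t = \mathcal{L}^{x,\alpha_t}y_t + c_t(x,a_t(x))$ with $y_T = f$, the generator $\mathcal{L}^{x,\alpha_t}$ absorbing the drift $b(x,a_t(x))$. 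One must also note, as the excerpt does, that the optimal $\alpha_t$ depends on $\bar D_t$ only through $Q_t$ and not on $\bar D_t$ itself, so $V_t\equiv 0$ and the auxiliary equation (\ref{eq:FSDE3_alpha}) decouples and may be dropped, making the reduction to the deterministic HJB system legitimate.

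I expect the main obstacle to be the rigorous application of the comparison theorem: one needs enough regularity and integrability on $b$, $c_t$, $\sigma$ and $f$ (e.g.\ Lipschitz drivers, a coercive/quadratic-growth running cost as in Example~\ref{ex:control} so that the $\arg\min$ exists and is measurable, and moment bounds on $\bar D_t$ to control the Girsanov density) for the comparison theorem and the nonlinear Feynman--Kac formula to apply, and for the interchange of $\arg\min$ with the $\mu$-expectation to be valid. The clean pointwise-in-$\omega$ minimization argument hides these hypotheses; a fully rigorous treatment would either impose standing assumptions guaranteeing a unique smooth solution of (\ref{eq:bPDF}) (so $\nabla_x y_t$ and hence $a_t$ are well-defined and the verification argument runs) or work with viscosity solutions. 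Everything else --- the unbiasedness identity, the Girsanov step, the $\bar Y_t = \bar D_t Y_t$ factorization --- is routine once the BSDEs from the previous subsections are in hand.
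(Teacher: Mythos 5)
Your proposal is correct and follows essentially the same route as the paper: the unbiased estimator via the BSDE with running cost, the Girsanov change to the weak formulation with the ansatz $\bar{Y}_t=\bar{D}_t Y_t$, the comparison theorem applied pointwise to the driver $c_t+\sigma^{-1}Q_t^{\rm T}b$ (using that $\bar{X}_t$ is control-independent), the observation that $V_t\equiv 0$ so the $\bar{D}_t$ equation decouples, and the identification $Q_t=\sigma\nabla_x y_t(\bar{X}_t)$ with $y_t$ solving the HJB equation. Your added remarks on the regularity and integrability hypotheses needed for the comparison theorem and the Feynman--Kac representation go beyond the paper's (formal) treatment but do not change the argument.
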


\noindent
In other words, the stochastic optimal control problem with cost (\ref{eq:OC_cost_full}) has been reduced to a data-independent PDE/FBSDE formulation discussed in detail in \cite{Carmona}. Once the optimal state control $\alpha_t = a_t(X_t)$ has been found, it can be used in (\ref{eq:controlled_SDE}) and the associated filtering problem can be solved using, for example, the feedback particle filter \cite{meyn13,CRS22,taghvaei2023survey}. In summary, it has been demonstrated that the problem of finding the optimal data-dependent state control is separable \cite{Wittenmark75}. 

We emphasize that the optimal state control problem discussed  so far is different from the one usually considered for partially observed Markov processes where the control $\alpha_t$ should depend on the previously observed data $Z_{0:t}$ only \cite{Borkar,Bensoussan92}. Following the certainty equivalence principle \cite{WW81,Wittenmark75}, a data-dependent control is provided by
\begin{equation}
\alpha_t = \pi_t [a_t],
\end{equation}
where $a_t(x)$ is given by the optimal state-dependent control (\ref{eq:AT}). The thus defined control is optimal for linear Gaussian processes \cite{WW81,Bensoussan92}. 

More generally, optimality requires to replace (\ref{eq:optimlity}) by a direct minimization of (\ref{eq:cost_estimagor}) over all admissible controls 
$\alpha_{0:T}$. It is then to be expected that $V_t\not=0$ in the BSDE
(\ref{eq:BSDE_OC}), in general, and the backward PDE (\ref{eq:bPDF}) gets replaced by the BSPDE 
\begin{equation} \label{eq:BSPDE_OC}
-{\rm d} {\rm Y}_t  = \mathcal{L}^{x,\alpha_t} {\rm Y}_t {\rm d}t   + c_t(\cdot ,\alpha_t){\rm d}t -
{\rm V}_t^{\rm T} {\rm d}W_t, \qquad {\rm Y}_T = f.
\end{equation}
This SPDE formulation leads to a dual optimal control problem \cite{Wittenmark75} with the forward SPDE provided by the KS equation 
\begin{equation} \label{eq:KSE}
{\rm d}\pi_t = (\mathcal{L}^{x,\alpha_t})^\dagger \pi_t {\rm d}t +
\pi_t (h-\pi_t[h])^{\rm T}{\rm d}I_t
\end{equation}
\cite{jazwinski2007stochastic,bain2008fundamentals}. Here $(\mathcal{L}^{x,\alpha_t})^\dagger$ denotes the adjoint of $\mathcal{L}^{x,\alpha_t}$. Upon calculating $
{\rm d}\pi_t[{\rm Y}_t]$, it is easy to verify that
\begin{equation} \label{eq:cost_estimator_PDE}
\mathcal{V}_T(\alpha_{0:T}|I_{0:T}) = \mu[{\rm Y}_0]
+\int_0^T \pi_t\left[{\rm V}_t + {\rm Y}_t (h - \pi_t[h]) \right]^{\rm T} {\rm d}I_t,
\end{equation}
which corresponds to (\ref{eq:cost_estimagor}). See Eq.~8.2.40 in \cite{Bensoussan92} for a related BSPDE formulation associated with Zakai's evolution equation for the unnormalized filtering distribution $\sigma_t$. 

\begin{lemma} The control in the KS equation (\ref{eq:KSE}) and the BSPDE (\ref{eq:BSPDE_OC}) has to satisfy
\begin{equation} \label{eq:optimal_alpha}
\alpha_t = \arg \min_\alpha \pi_t\left[
\mathcal{L}^{x,\alpha} {\rm Y}_t    + c_t(\cdot ,\alpha) \right]
\end{equation}
in order to minimize $\mathcal{V}_T(\alpha_{0:T}) = \mu[{\rm Y}_0]$.
\end{lemma}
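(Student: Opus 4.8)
The plan is to derive (\ref{eq:optimal_alpha}) as a Pontryagin-type first-order condition for the coupled forward--backward SPDE system consisting of the KS equation (\ref{eq:KSE}) (forward) and the BSPDE (\ref{eq:BSPDE_OC}) (backward), with $\pi_t$ playing the role of the adjoint of the backward equation and ${\rm Y}_t$ that of the KS equation. Taking the expectation in (\ref{eq:cost_estimator_PDE}) and using that the ${\rm d}I_t$-integral has zero mean gives $\mathcal{V}_T(\alpha_{0:T}) = \mathbb{E}[\mu[{\rm Y}_0]]$, while the identity $\pi_t[g] = \mathbb{E}_I^\ast[D_t g(X_t)]$ together with (\ref{eq:OC_cost}) gives the second representation
\begin{equation*}
\mathcal{V}_T(\alpha_{0:T}) = \mathbb{E}\big[\pi_T[f]\big] + \mathbb{E}\!\left[\int_0^T \pi_t[c_t(\cdot,\alpha_t)]\,{\rm d}t\right].
\end{equation*}

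Next, fix a candidate optimal control $\alpha^\ast$ with associated solution $(\pi_t^\ast,{\rm Y}_t^\ast,{\rm V}_t^\ast)$ of (\ref{eq:KSE})--(\ref{eq:BSPDE_OC}), and let $\alpha$ be any admissible competing control with filter $\pi_t=\pi_t^\alpha$ solving (\ref{eq:KSE}). I would apply It\^o's formula to $t\mapsto\pi_t[{\rm Y}_t^\ast]$, using (\ref{eq:KSE}) with control $\alpha$ for ${\rm d}\pi_t$ and (\ref{eq:BSPDE_OC}) with control $\alpha^\ast$ for ${\rm d}{\rm Y}_t^\ast$. This is the very computation that already produced (\ref{eq:cost_estimator_PDE}), the only difference being that the generator entering through the forward equation, $\mathcal{L}^{x,\alpha_t}$, is no longer the same as the one entering through the backward equation, $\mathcal{L}^{x,\alpha_t^\ast}$; since only the generator term is control-dependent there, the quadratic covariation and the $\pi_t[h]$-terms cancel exactly as before. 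Integrating over $[0,T]$, using ${\rm Y}_T^\ast=f$ and $\pi_0=\mu$, and taking expectations (the ${\rm d}I_t$-martingale drops out) yields
\begin{equation*}
\mathbb{E}\big[\pi_T[f]\big] - \mathbb{E}\big[\mu[{\rm Y}_0^\ast]\big] = \mathbb{E}\!\left[\int_0^T \pi_t\!\big[\mathcal{L}^{x,\alpha_t}{\rm Y}_t^\ast - \mathcal{L}^{x,\alpha_t^\ast}{\rm Y}_t^\ast - c_t(\cdot,\alpha_t^\ast)\big]{\rm d}t\right],
\end{equation*}
and combining this with the two representations of $\mathcal{V}_T$ above gives the cost-gap identity
\begin{equation*}
\mathcal{V}_T(\alpha_{0:T}) - \mathcal{V}_T(\alpha^\ast_{0:T}) = \mathbb{E}\!\left[\int_0^T\!\Big(\pi_t\big[\mathcal{L}^{x,\alpha_t}{\rm Y}_t^\ast + c_t(\cdot,\alpha_t)\big] - \pi_t\big[\mathcal{L}^{x,\alpha_t^\ast}{\rm Y}_t^\ast + c_t(\cdot,\alpha_t^\ast)\big]\Big){\rm d}t\right].
\end{equation*}

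To extract (\ref{eq:optimal_alpha}) I would use a needle (spike) variation. Assume $\alpha^\ast$ is optimal; fix a time $s$ and a control value $\beta$ measurable with respect to the observations up to $s$, set $\alpha_t^\varepsilon=\beta$ on $[s,s+\varepsilon]$ and $\alpha_t^\varepsilon=\alpha_t^\ast$ otherwise. By continuous dependence of the KS solution on the control, $\pi^{\alpha^\varepsilon}$ coincides with $\pi^\ast$ up to time $s$ and converges to $\pi^\ast$ on $[s,T]$ as $\varepsilon\to0$, so the cost-gap identity reads $0\le\mathcal{V}_T(\alpha^\varepsilon)-\mathcal{V}_T(\alpha^\ast)=\varepsilon\,\mathbb{E}\big[\pi_s^\ast[\mathcal{L}^{x,\beta}{\rm Y}_s^\ast+c_s(\cdot,\beta)] - \pi_s^\ast[\mathcal{L}^{x,\alpha_s^\ast}{\rm Y}_s^\ast+c_s(\cdot,\alpha_s^\ast)]\big]+o(\varepsilon)$. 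Dividing by $\varepsilon$, letting $\varepsilon\to0$, and localizing over observation events at time $s$ shows that $\alpha_s^\ast$ must attain $\min_\alpha\pi_s^\ast[\mathcal{L}^{x,\alpha}{\rm Y}_s^\ast+c_s(\cdot,\alpha)]$ for a.e.\ $s$, which is (\ref{eq:optimal_alpha}). For state-feedback controls $\alpha_t=a_t(x)$ this is also sufficient: if $a_t^\ast(x)$ minimizes $\mathcal{L}^{x,\alpha}{\rm Y}_t^\ast+c_t(\cdot,\alpha)$ pointwise in $x$, the integrand in the cost-gap identity is nonnegative for every admissible $\alpha$, so $\alpha^\ast$ is optimal.

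The main obstacle I expect is analytic rather than algebraic: one needs well-posedness and enough regularity of the coupled forward--backward SPDE system to justify It\^o's formula for $\pi_t[{\rm Y}_t^\ast]$ and the vanishing in expectation of the martingale terms, and --- more delicately --- the continuous (ideally differentiable) dependence of $\pi^\alpha$, and hence of the cost gap, on the control, which is what makes the needle variation rigorous. A secondary point worth care is that the cost-gap integrand involves the competing filter $\pi_t^\alpha$ rather than $\pi_t^\ast$; this is precisely why a localized perturbation (under which $\pi^{\alpha^\varepsilon}\approx\pi^\ast$) is the right device rather than a global comparison of two controls.
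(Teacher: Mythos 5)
Your derivation is correct as a formal argument, but it takes a genuinely different route from the paper. The paper's proof is a policy-iteration argument: it points to Theorem 8.2.1 of Bensoussan for the rigorous statement (phrased there via the Zakai equation and its adjoint BSPDE), and then argues formally that if a control violates (\ref{eq:optimal_alpha}), re-solving the BSPDE (\ref{eq:BSPDE_OC}) with the pointwise minimizing control against the frozen filter $\pi^{(0)}_{0:T}$ strictly decreases $\mu[{\rm Y}_0]$ by the comparison theorem; alternating between (\ref{eq:KSE}) and (\ref{eq:BSPDE_OC}) then produces a successively improving sequence of controls, so no control violating (\ref{eq:optimal_alpha}) can be optimal. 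You instead derive an explicit cost-gap identity by pairing the competing filter $\pi^{\alpha}_t$ with the candidate-optimal ${\rm Y}_t^\ast$ through It\^o's formula and extract (\ref{eq:optimal_alpha}) as a Pontryagin-type first-order condition via a needle variation; as a bonus, the same identity yields a verification-type sufficiency statement (nonnegativity of the integrand) that the paper's iteration does not provide, since the paper never proves its improvement scheme converges. The paper's route buys brevity and a direct computational interpretation (policy iteration) at the price of leaning on the comparison theorem and an unproven convergence claim; your route buys a quantitative necessary-and-sufficient criterion at the price of heavier analytic requirements (continuous dependence of $\pi^\alpha$ on the control), which you correctly identify as the main obstacle.

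One caveat deserves emphasis: under the physical measure both the innovation $I_t$ driving (\ref{eq:KSE}) and the observation error $W_t$ driving (\ref{eq:BSPDE_OC}) depend on the applied control, so your claim that the quadratic covariation and the $\pi_t[h]$ terms cancel ``exactly as before'' when the forward and backward equations carry different controls is not automatic and must be checked; the mixed It\^o computation for $\pi_t^\alpha[{\rm Y}_t^\ast]$ involves noises attached to two different control systems. This is precisely why the rigorous version in Bensoussan (and the paper's pointer to it) is formulated for the Zakai equation under the reference measure, where $Z_t$ is a control-independent Brownian motion and the comparison of two controls along a fixed observation path is unambiguous. At the purely formal level at which the paper itself argues your proof is acceptable, but to make the needle variation rigorous you should first transfer the cost-gap identity to the unnormalized, reference-measure setting.
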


\begin{proof} See Theorem 8.2.1.~in \cite{Bensoussan92} for the related optimality criterion in terms of Zakai's equation and its adjoint BSPDE formulation. A formal argument can be stated as follows. Let $\alpha_{0:T}^{(0)}$ denote some control and $\pi_{0:T}^{(0)}$, ${\rm Y}_{0:T}^{(0)}$ the associated solutions of
the KS equation (\ref{eq:KSE}) and the BSPDE (\ref{eq:BSPDE_OC}), respectively. Assume that the control does not satisfy (\ref{eq:optimal_alpha}). Then determine a new ${\rm Y}_{0:T}^{(1)}$ by solving (\ref{eq:BSPDE_OC}) subject to a new control law $\alpha_{0:T}^{(1)}$ determined by (\ref{eq:optimal_alpha}) with $\pi_{0:T} = \pi_{0:T}^{(0)}$. It holds by the comparison theorem that $\mu[{\rm Y}_0^{(1)}] < \mu[{\rm Y}_0^{(0)}]$. Next determine $\pi_{0:T}^{(1)}$ by solving (\ref{eq:KSE}) with the new control law $\alpha_{0:T}^{(1)}$. This procedure is to be repeated until a control $\alpha_{0:T}^\ast$ has been found which cannot be improved upon further.
\end{proof}

\begin{example}
Let us return to Example \ref{ex:control}. The optimal control satisfies
\begin{equation} \label{eq:exOC}
\alpha_t = -G^{\rm T} \pi_t[\nabla_x {\rm Y}_t].
\end{equation}
The function ${\rm Y}_{0:T}$ and the filtering distribution 
$\pi_{0:T}$ have to be determined self consistently using the BSPDE
(\ref{eq:cost_estimator_PDE}) and the KS equation
(\ref{eq:KSE}), respectively.
\end{example}

\noindent
The SPDE pair (\ref{eq:KSE}) and (\ref{eq:BSPDE_OC}) can be replaced by the FSDEs (\ref{eq:FSDE1_alpha}) and (\ref{eq:FSDE3_alpha}) together with the BSDE (\ref{eq:BSDE_OC}). The optimility condition (\ref{eq:optimal_alpha}) turns then into
\begin{equation} \label{eq:optimality_data}
    \alpha_t = \arg \min_{\alpha} \bar{\mathbb{E}}_I^\ast
    \left\{ \bar{D}_t c_t(\bar{X}_t,\alpha) + \sigma^{-1} 
    \bar{D}_t Q_t^{\rm T} b(\bar{X}_t,\alpha) \right] .
\end{equation}
In other words, the derivations of state- and data-dependent optimal controls differ only in the choice of the minimizing control, that is, by obeying either (\ref{eq:optimlity}) or (\ref{eq:optimality_data}) along the same set of FBSDEs. It will be of interest to  explore numerical approximations along the lines of \cite{CKSY21,EHJ17,EHJ21}.

\begin{remark}
We close this section by highlighting a link to the observation-based estimator (\ref{eq:est}) and its derivation in Section \ref{sec:obII}. Let us formally introduce the following partially observed stochastic control problem. We set $b(x,\alpha) = b(x)$ in (\ref{eq:controlled_SDE}) and introduce the running cost 
\begin{equation}\label{eq:cost_optimal_estimator}
    c_t(x,\alpha) = h(x)^{\rm T}\alpha.
\end{equation}
Hence the cost functional (\ref{eq:OC_cost}) becomes
\begin{equation} \label{eq:dddd}
\mathcal{V}_T(\alpha_{0:T}|I_{0:T}) 
= \mathbb{E}_I^\ast \left[ \int_0^T D_t h(X_t)^{\rm} \alpha_t {\rm d}t +
D_T f(X_T)\right] 
\end{equation}
and its estimator is given by (\ref{eq:cost_estimagor}) and (\ref{eq:cost_estimator_PDE}), respectively. Then (\ref{eq:cost_estimator_PDE}) and (\ref{eq:dddd}) together imply
\begin{equation}
\pi_T[f] = \mu[{\rm Y}_0] - \int_0^T \pi_t[h]^{\rm T}\alpha_t {\rm d}t
+ \int_0^T \pi_t\left[{\rm V}_t + {\rm Y}_t (h - \pi_t[h]) \right]^{\rm T} {\rm d}I_t.
\end{equation}
The definition (\ref{eq:innovation}) of the innovation process dictates
that the estimator (\ref{eq:est}) corresponds to the choice
\begin{equation}
    \mathcal{U}_t = \alpha_t = -\pi_t\left[{\rm V}_t + {\rm Y}_t (h - \pi_t[h]) \right].
\end{equation}
However, please be aware that $\alpha_{0:T} = \mathcal{U}_{0:T}$ is not a minimizer of the cost functional (\ref{eq:dddd}) \cite{KimMehta2022a}. 
\end{remark}

%
\section{Conclusions}
%

Building on the previous work \cite{Kim22,KimMehta2022a,KimMehta2022b} on optimal estimation for nonlinear filtering problems, we tackled the problem in this paper from a FBSDE approach, as also widely used in the context of optimal control problems \cite{Carmona}. Our approach sheds new light on the underlying structure of minimum variance estimation by carefully selecting the set of FBSDEs. In particular, we have strictly followed the classical FBSDE framework by only allowing for Brownian noise in the FBSDE formulations. We have also demonstrated that the two estimation formulations for $\sigma_T[f]$ lead, in fact, to a deterministic control law. This fact had been previously utilized in \cite{Kim22} in the context of scenario (ii) for deriving Zakai's equation; but its full implications only emerge in the context minimum variance estimation, its extension to the estimation of $\pi_T[f]$, and applications to stochastic optimal control. A further application could include the study of filter stability using (\ref{eq:Feynman-Kac}). 

It will also be of interest to explore numerical approximations of the proposed estimators and control laws based, for example, on \cite{CKSY21,EHJ17,EHJ21} and their relation to mean-field filtering equations \cite{meyn13,CRS22,taghvaei2023survey}. Finally, the semi-group approach to partially observed optimal control problems, as discussed in \cite{Nisio15}, allows one to avoid the BSPDE formulations (\ref{eq:BSDE_OC}) and (\ref{eq:BSPDE_K}), respectively. The arising (deterministic) HJB equation on a Hilbert space opens the door to alternative numerical approaches worth investigating. See, for example, \cite{PM13}.



\medskip

\noindent
{\bf Acknowledgment.} This work has been funded by Deutsche Forschungsgemeinschaft (DFG) - Project-ID 318763901 - SFB1294.
The authors thank Prashant Mehta for sharing his insight into the subject of this paper. The authors would also like to thank the Isaac Newton Institute for Mathematical Sciences, Cambridge, for support and hospitality during the programme  {\it The Mathematical and Statistical Foundation of Future Data-Driven Engineering} where work on this paper was undertaken. This work was supported by EPSRC grant no EP/R014604/1.

\bibliographystyle{plainurl}
%
\bibliography{bib-database}
%

\end{document}